\newcommand{\QQ}{\mathbb{Q}}
\newcommand{\ZZ}{\mathbb{Z}}
\newcommand{\CC}{\mathbb{C}}
\newcommand{\RR}{\mathbb{R}}
\newcommand{\FF}{\mathbb{F}}
\newcommand{\PP}{\mathbb{P}}
\renewcommand{\AA}{\mathbb{A}}
\newcommand{\Gal}{\operatorname{Gal}}
\newcommand{\PGL}{\operatorname{PGL}}
\newcommand{\Rat}{\operatorname{Rat}}
\renewcommand{\epsilon}{\varepsilon}
\renewcommand{\phi}{\varphi}
\newcommand{\HH}{\mathbb{H}}
\newcommand{\Berk}{\textup{Berk}}
\newcommand{\PBerk}{\PP^1_{\Berk}}
\newcommand{\ABerk}{\AA^1_{\Berk}}
\newcommand{\HBerk}{\HH_{\Berk}}
\newcommand{\Dbar}{\overline{D}}
\newcommand{\PK}{\PP^1(K)}
\DeclareMathOperator{\rad}{rad}
\DeclareMathOperator{\charact}{char}
\newtheorem{theorem}{Theorem}[section]
\newtheorem{lemma}[theorem]{Lemma}
\newtheorem{cor}[theorem]{Corollary}
\theoremstyle{remark}
\newtheorem{remark}[theorem]{Remark}
\newtheorem{example}[theorem]{Example}
\theoremstyle{definition}
\newtheorem{defn}[theorem]{Definition}
\begin{document}

\newcounter{bean}

\title[Attracting cycles in $p$-adic dynamics]{Attracting cycles in $p$-adic dynamics and height bounds for post-critically finite maps}
\author[R.~Benedetto, P.~Ingram, R.~Jones, and A.~Levy]{Robert Benedetto, Patrick Ingram, Rafe Jones, and Alon Levy}
\date{\today}
\subjclass[2010]{37P20 (primary), 37P45, 37F10 (secondary)}

\address{Department of Mathematics, Amherst College, Amherst, MA}
\address{Department of Mathematics, Colorado State University, Fort Collins, CO}
\address{Department of Mathematics, Carleton College, Northfield, MN}
\address{Department of Mathematics, University of British Columbia, Vancouver, Canada}

\begin{abstract}
A rational function of degree at least two with coefficients in an algebraically closed field is post-critically finite (PCF) if and only if all of its critical points have finite forward orbit under iteration.  We show that the collection of PCF rational functions is a set of bounded height in the moduli space of rational functions over the complex numbers, once the well-understood family known as flexible
Latt\`es maps is excluded. As a consequence, there are only finitely many conjugacy classes of non-Latt\`es PCF rational maps of a given degree defined over any
given number field. The key ingredient of the proof is a non-archimedean version of Fatou's classical result that every attracting cycle of a rational function over
$\CC$ attracts a critical point.
\end{abstract}

\maketitle

\section{Introduction}\label{intro}

A rational function $\phi \in \CC(z)$ is \textit{post-critically finite} (PCF) if all of its critical points in $\PP^1(\CC)$ have finite forward orbits under iteration
of $\phi$.  Over $\CC$, where the orbits of the critical points are known to play a central role in the global dynamics of a map, it is not surprising that PCF maps
exhibit interesting behavior. Since Thurston's foundational result on PCF maps \cite{thurston}, a growing body of work has focused on their properties \cite{BFH, pilgrim, koch2, cfpp, hp, koch1, pilgrim2, poirier, rong, selinger}.  In this article, we study PCF maps from an arithmetic point of view, where their novel properties are only beginning to be explored.

Our interest here lies in the distribution of PCF maps in the moduli
space $\mathcal{M}_d$ of rational functions of degree $d\geq 2$ up to
change of variables; see Section~\ref{elaboration} for more precise
definitions. Thurston's result implies that apart from a
well-understood class of PCF maps associated to elliptic curves, known
as the flexible Latt\`es maps, there are only finitely many conjugacy
classes of rational maps defined over $\CC$ whose critical points each
have orbits of length not exceeding $N$, for any given integer $N$.
Moreover, all such maps are
$\overline{\QQ}$-rational points in the moduli space $\mathcal{M}_d$.
Hence, the PCF points in $\mathcal{M}_d$
consist of the Latt\`es locus plus a countable set, and they
are therefore in a precise sense a sparse subset of
$\mathcal{M}_d(\CC)$. However, this fact does not
\emph{a priori} preclude the possibility
that many, or even most, points in $\mathcal{M}_d(\overline{\QQ})$
give conjugacy classes of PCF maps.  Our main result shows this is not
the case, by bounding the height
of PCF points, and thereby proving a conjecture of Silverman~\cite[Conjecture~6.30, p.~101]{barbados}.  If $U\subseteq \mathcal{M}_d$ denotes the subvariety of non-Latt\`{e}s maps, then work of McMullen~\cite{mcmullen} shows that the map $\Lambda:U\to\PP^M$, taking a rational function to the multiplier spectrum of its $N$-periodic points, is finite when $N$ is large enough. The height referred to in the next theorem is the pull-back of the Weil height on $\PP^M$ by $\Lambda$ for some sufficiently large $N$.

\begin{theorem} \label{th:pcfmain}
For each $d \geq 2$, the PCF locus in $\mathcal{M}_d(\overline{\QQ})$
consists of the flexible Latt\`{e}s locus, plus a set of bounded
height. In particular, for any fixed integer $B\geq 1$,
there are, up to change of variables over $\overline{\QQ}$,
only finitely many PCF rational functions of degree $d$
that are not flexible Latt\`es maps but
that may be defined over a number field of degree at most $B$.
\end{theorem}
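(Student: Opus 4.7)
The overall plan is to bound the Weil height of $\Lambda([\phi])$ uniformly over non-Latt\`es PCF $\phi$; the finiteness assertion then follows because, by McMullen's theorem, $\Lambda\colon U\to \PP^M$ is a finite morphism, so pulling back Northcott's theorem on $\PP^M$ gives finitely many conjugacy classes over number fields of bounded degree.

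Fix a PCF map $\phi$ of degree $d$ that is not flexible Latt\`es, defined over a number field $K$, and let $\lambda$ range over multipliers of $N$-periodic cycles (for the $N$ from McMullen's result). It suffices to bound $h(\lambda)$ uniformly in $\phi$, and by the product formula this amounts to bounding the contributions $\log^+|\lambda|_w^{-1}$ at places $w$ of the splitting field $L$ at which the corresponding cycle is $w$-adically attracting (the contributions from repelling places are handled symmetrically by considering $1/\lambda$, and the bounded-reduction places contribute nothing). At an archimedean place, classical Fatou theory together with standard estimates for complex rational maps give uniform control: there are at most $2d-2$ attracting cycles, and the PCF hypothesis pins down the multipliers via the corresponding critical orbits. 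At a non-archimedean place $w$ with $|\lambda|_w<1$, I invoke the paper's non-archimedean Fatou theorem to produce a critical point $c\in\CC_w$ whose iterates $\phi^n(c)$ converge $w$-adically to the cycle. Since $\phi$ is PCF over $\overline{\QQ}$, the orbit $\{\phi^n(c)\}$ is a finite subset of $\PP^1(\overline{\QQ})$, and $w$-adic convergence of a finite set to a finite set forces $\phi^n(c)$ eventually to lie exactly on the cycle. Matching the attracting cycles at $w$ against the $2d-2$ critical points bounds their number, and combining the combinatorics of landing with the local dynamical information supplied by the non-archimedean Fatou theorem turns this into a quantitative bound on $\sum_w \log^+|\lambda|_w^{-1}$, hence on $h(\lambda)$.

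I expect the main obstacle, by a wide margin, to be the non-archimedean Fatou theorem itself. Its classical proof over $\CC$ rests on Montel's theorem and normal families, tools with no direct analogue over a non-archimedean complete valued field, so the authors presumably work on the Berkovich projective line $\PBerk$, analyze the basin of a non-archimedean attracting cycle as an open subset, and argue geometrically that its closure must meet the critical set. A secondary, but still non-trivial, difficulty will be making that Fatou-type statement sufficiently quantitative at each place: for the global bound on $h(\Lambda([\phi]))$ one needs not merely the existence of an attracted critical point but a local height estimate strong enough that, summed over all places of $L$ and all multipliers $\lambda$, it closes up into the uniform bound claimed.
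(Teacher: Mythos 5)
Your proposal follows essentially the same route as the paper's proof: at each archimedean place, Fatou's theorem plus the PCF hypothesis forces $|\lambda|_v\geq 1$; at each non-archimedean place over $p$, Theorem~\ref{th:main} forces $|\lambda|_v\geq\epsilon_{p,d}$ (and since $\epsilon_{p,d}=1$ for $p>d$, only the finitely many primes $p\leq d$ contribute), so $h(\lambda)=h(\lambda^{-1})\leq\sum_{p\leq d}\log\epsilon_{p,d}^{-1}$, after which McMullen's finiteness of the multiplier-spectrum map together with Northcott's theorem concludes. Two of your side remarks are unnecessary but harmless: repelling places contribute nothing to $h(\lambda^{-1})$ and so need no ``symmetric'' treatment, and no matching of attracting cycles against the $2d-2$ critical points is required for the height bound --- the pointwise local lower bounds on $|\lambda|_v$ already suffice.
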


From a number-theoretic perspective, one of the motivations for
studying PCF points in $\mathcal{M}_d$ is an analogy with
CM points in the moduli space of elliptic curves.
If $\varphi$ is defined over a number field $L$, one
may associate to $\phi$ an {\em arboreal Galois representation} via
the natural action of $\Gal(\overline{L}/L)$ on the infinite tree of
all preimages, under the iterates of $\varphi$, of a fixed
$L$-rational point.
The image of this representation is much smaller for PCF maps than
for typical rational functions \cite{hajir, BJ2}.
By comparison,
the $\ell$-adic Galois representations
of elliptic curves with complex multiplication
have much smaller images than
those attached to elliptic curves without CM.

The second claim in Theorem~\ref{th:pcfmain} is very similar to a statement for $j$-invariants of CM elliptic curves, which follows from class field theory and the Gauss Conjecture, originally
proven by Heilbronn (we note, however, that the set of $j$-invariants of CM elliptic curves is not of bounded height). In the case of polynomials, the second author \cite{pcfpoly} has already established Theorem \ref{th:pcfmain}, which follows from stronger
results relating the height of the coefficients of a polynomial to the rate of growth of the heights in its critical orbits, but the arguments in \cite{pcfpoly} seem unlikely to generalize to rational functions.  The proof of Theorem~\ref{th:pcfmain} follows a fundamentally different approach, which comes from studying the height of the multipliers of periodic cycles.

Let $K$ be a field, let $\phi\in K(z)$ be a rational function,
and let $\gamma\in\PK$.
If $\varphi(\gamma) = \gamma$, we say $\gamma$ is a \emph{fixed point}
of $\phi$.
In that case, by a change of coordinates, we may assume
that $\gamma\neq\infty$, and define the \emph{multiplier} of $\gamma$
to be $\lambda=\phi'(\gamma)\in K$.
More generally, if $\phi^n(\gamma)=\gamma$
for some $n\geq 1$, where $\phi^n$ denotes the $n$-fold
composition $\phi\circ\cdots\circ\phi$, then we say $\gamma$
is a \emph{periodic point} of $\phi$ of period $n$; and if $n$
is the minimal period of $\gamma$, then we define
the \emph{multiplier} of $\gamma$ to be $\lambda=(\phi^n)'(\gamma)\in K$.
The multiplier is invariant under coordinate change,
and it is the same for each point $\phi^k(\gamma)$ in the
forward orbit, or periodic cycle, of the periodic point $\gamma$.
If $K$ is equipped with an absolute value $|\cdot|$, we say that
the orbit of the periodic point $\gamma$ is
\emph{attracting} if $0 \leq |\lambda| < 1$; the case $\lambda=0$ is referred to as the \emph{superattracting} case.
We say that $\gamma$
\emph{attracts} $x \in K$ if
$\lim_{m\to\infty}\varphi^{nm}(x) = \phi^k(\gamma)$
for some $0\leq k\leq n-1$, and that
$\gamma$ \textit{strictly attracts} $x\in K$
if $\gamma$ attracts $x$, but
$\varphi^l(x) \neq \gamma$ for all $l \geq 1$.

The main engine in our proof of Theorem~\ref{th:pcfmain}
is the following result.

\begin{theorem}\label{th:main}Let $K$ be an algebraically closed field which is complete with respect to a non-trivial non-archimedean absolute value $|\cdot|$. Let $p\geq 0$ be the residue characteristic of $K$, let $d\geq 2$ be an integer, and assume either that $\charact K=0$ or that $\charact K > d$.  Define the real number $\epsilon=\epsilon_{p,d}\leq 1$ to be $$\epsilon = \min\{ |m|^d : 1\leq m \leq d\},$$ which is clearly positive. Let $\varphi(z) \in K(z)$ be a rational function of degree $d$, and let $\gamma$ be a fixed point of $\varphi$ satisfying $$0 < |\varphi'(\gamma)| < \epsilon.$$ Then there is a critical point of $\varphi$ which is strictly attracted to $\gamma$.\end{theorem}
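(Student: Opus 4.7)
My plan is to translate so that $\gamma=0$, identify a maximal disk $\Dbar(0,r_0)$ on which $\varphi$ is a linear-like attracting bijection, and locate a critical point of $\varphi$ on or just outside this disk whose forward orbit is driven into $\Dbar(0,r_0)$ by the hypothesis $|\lambda|<\epsilon$.

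After translation, write $\varphi=N/D$ with polynomials $N,D\in K[z]$ of degree at most $d$, $D(0)=1$, $N(0)=0$, so that $\varphi$ has Taylor expansion $\varphi(z)=\lambda z+\sum_{k\geq 2}c_k z^k$ on $D(0,R)$, where $R$ is the distance from $0$ to the nearest pole. Set
\[
r_0\;=\;\sup\bigl\{r<R:|c_k|\,r^{k-1}\leq|\lambda|\text{ for all }k\geq 2\bigr\},
\]
which is finite and positive because $\deg\varphi\geq 2$. On $\Dbar(0,r_0)$ the map $\varphi$ has no critical points or poles and is a contracting analytic bijection onto $\Dbar(0,|\lambda|r_0)$, so $\Dbar(0,r_0)$ lies in the basin of $0$, and $0$ is the unique preimage of $0$ inside $\Dbar(0,r_0)$.

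To produce a critical point, I analyze the polynomial $P(z)=N'(z)D(z)-N(z)D'(z)$ of degree at most $2d-2$, whose roots are the finite critical points of $\varphi$. Its constant term is $P(0)=\lambda$, and every other coefficient is a sum of terms $\pm m\cdot n_i d_j$ with integer $m\in\{1,\dots,d\}$. The Newton polygon of $P$, starting from the high vertex $(0,-\log|\lambda|)$, must descend using a vertex whose height involves such a small integer factor $|m|$; a Newton-polygon analysis then places a zero $\xi$ of $P$ at distance $|\xi|=r_0\cdot|m|^{-1/k_1}$ for some $m\in\{1,\dots,d\}$ and some $k_1\in\{1,\dots,2d-2\}$. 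When $p\nmid m$, we have $|\xi|=r_0$ and $\xi\in\Dbar(0,r_0)$; when $p\mid m$, we have $|\xi|>r_0$ and the $\epsilon$-hypothesis becomes essential.

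In the wild case $p\mid m$, a direct computation of $|\varphi(\xi)|$ using either the Taylor series (when $|\xi|<R$) or the rational representation $\varphi=N/D$ (when $|\xi|\geq R$) yields an upper bound of the form $|\varphi(\xi)|\leq|\lambda|\,r_0\cdot|m|^{-d/k_1}$. Since $|m|\leq 1$ and $k_1\geq 1$ force $|m|^d\leq|m|^{d/k_1}$, the hypothesis $|\lambda|<\epsilon\leq|m|^d$ gives $|\varphi(\xi)|<r_0$, so $\varphi(\xi)\in\Dbar(0,r_0)$ and $\xi$ is attracted to $0$. Strict attraction follows because $\xi\neq 0$ (as $\lambda\neq 0$), and since $0$ is the only preimage of $0$ inside $\Dbar(0,r_0)$, the orbit $\varphi^n(\xi)\to 0$ cannot land on $0$. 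The main technical obstacle will be the Newton-polygon analysis that locates $\xi$ precisely at $|\xi|=r_0\cdot|m|^{-1/k_1}$ with $m\leq d$, together with the separate treatment of the degenerate case in which a pole of $\varphi$ sits on the Koenigs boundary $|z|=r_0$; both rely on the rational-function structure of $\varphi$ of degree $d$ rather than merely its Taylor expansion at $0$.
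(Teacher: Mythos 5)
Your strategy is genuinely different from the paper's (which runs a slope-counting argument for a piecewise-linear function on Berkovich segments over the whole affinoid preimage $V$ of the attracting disk), but as written it has gaps that are fatal precisely in the rational, non-polynomial case. The central problem is that your argument only probes the segment from $0$ to $\infty$: the Newton polygon of $P=N'D-ND'$ centered at $0$ can at best tell you the \emph{absolute value} $|\xi|$ of a critical point, and for a rational function this does not control $|\varphi(\xi)|$. Your Taylor-series estimate $|\varphi(\xi)|\leq\|\varphi\|_{\zeta(0,|\xi|)}\leq|\lambda|\,r_0\,(|\xi|/r_0)^{d}$ is valid only when $\Dbar(0,|\xi|)$ contains no poles of $\varphi$; once a pole lies within distance $|\xi|$ of the origin (and this is not a degenerate boundary case --- it is the generic situation the theorem must handle), a critical point on the circle $|z|=|\xi|$ can sit arbitrarily close to that pole and have $|\varphi(\xi)|$ arbitrarily large, while the critical point you actually need may be hidden inside one of the holes of $V=D(0,R)\setminus(W_1\cup\cdots\cup W_n)$, at a location the Newton polygon at $0$ cannot see. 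This is exactly why the paper's proof must descend along the segments from $\zeta(0,S)$ toward the centers $b_i$ of those holes and sum the resulting slope inequalities; your proposal has no substitute for that step. (Your outline does, in essence, recover the easier disk case, which is the paper's Theorem~\ref{th:polys} and Remark~\ref{rem:polymore}, with the sharper constant $|m|^m$.) A secondary unproven point is the Newton-polygon location itself: the coefficients of $P$ are sums $\sum_{i+j=k+1}(i-j)n_i d_j$, and cancellation among these terms must be ruled out before you can assert a vertex of the claimed height.

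There is also a gap in the strictness of the attraction. If the critical point $\xi$ you locate lies outside $\Dbar(0,r_0)$, nothing prevents $\varphi(\xi)=0$ exactly (i.e., $\xi$ is a multiple zero of $\varphi$), in which case $\xi$ is superattracted to $\gamma$ rather than strictly attracted; your injectivity argument only excludes this when $\xi$ itself lies in $\Dbar(0,r_0)$. The paper handles this by explicitly counting the critical points in $W$ that are \emph{not} zeros of $\varphi$, via the identity $N(\varphi',W,0)=M+N(\varphi,W,0)-t$ together with $N(\varphi,W,0)\leq m$, and showing $M>0$. Some analogue of that count is needed in your approach as well.
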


Note that if $p=0$ or $p > d$, then the definition above gives $\epsilon_{p,d} = 1$. In addition, we will see in Theorem~\ref{th:polys} that if $\varphi(z)\in K[z]$ is a polynomial, then the constant $\epsilon$ in Theorem~\ref{th:main} can be improved to $\epsilon_{p, d}^{\mathrm{poly}}=\min\{|m|^m:1\leq m\leq d\}$, which is sharp in that case. For general rational functions, however, we will see in Section~\ref{sec:sharp} that the bound $\epsilon_{p,d}$ is not sharp.

Note also that Theorem~\ref{th:main} applies to function fields of characteristic $0$ or $p > d$, with $\epsilon = 1$, but says nothing about maps defined over function fields of characteristic $p$ with $0 < p \leq d$, since then the value of $\epsilon$ is $0$. Indeed, we can find PCF maps over such fields with arbitrarily small nonzero multipliers: the map $z^{p} + t^{n}z$, defined over the completed algebraic closure of $\FF_p((t))$, is PCF and has a fixed point at $z=0$ of multiplier $t^n$, which can be made arbitrarily small by increasing $n$.

It is a classical result from complex dynamics that over $\CC$, every
attracting cycle is either super-attracting, or strictly attracts a critical point
\cite[Theorem~9.3.1]{beardon}.
The proof uses complex analysis in a fundamental
way. The analogous statement over a non-archimedean field of positive
residue characteristic, however, is false; for instance, the map
$\varphi(z) = z^{p}$, defined over the completion $\CC_{p}$ of the
algebraic closure of $\QQ_{p}$, has the curious property that every
cycle is attracting, with respect to the natural extension of the
$p$-adic absolute value.  Thus, although the full strength of the
classical complex result does not carry over to non-archimedean
fields, Theorem~\ref{th:main} says that in the non-archimedean case, a
non-superattracting
cycle strictly attracts a critical point provided that the cycle is
\textit{sufficiently} attracting.

We will deduce Theorem \ref{th:pcfmain} from Theorem \ref{th:main}
in Section~\ref{elaboration}, but we briefly sketch the idea here.
By definition, a PCF map defined over a number field $L$ cannot have a
critical point strictly attracted to a periodic point over \emph{any}
completion $L_v$, as $v$ varies over the places of $L$.  Thus if
$\varphi$ is a PCF map defined over $L$, then Theorem~\ref{th:main}
provides an upper bound on the $v$-adic absolute value of the
reciprocal of the multiplier of any cycle of length $n$, for such a
cycle consists of fixed points of $\varphi^n$.  The local bounds
induce a bound on the height of any such multiplier. For example, in the case $d = 2$, we obtain:
\begin{cor}\label{cor:quadratic} Let $\varphi(z) \in \CC(z)$ be a rational function of degree $2$, let $h$ denote the logarithmic Weil height, and suppose that $\varphi$ is PCF. If $\lambda$ is the
multiplier of a fixed point of $\varphi$, then $h(\lambda) \leq \log 4$.
\end{cor}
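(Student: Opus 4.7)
The plan is to combine Theorem~\ref{th:main} at non-archimedean places with the classical archimedean Fatou theorem recalled in the introduction, then assemble the resulting local lower bounds on $|\lambda|_v$ into a global height bound via the product formula. Since $\varphi$ is PCF and no flexible Latt\`es maps exist in degree two, the countability statement recalled in the introduction places the conjugacy class of $\varphi$ in $\mathcal{M}_2(\overline{\QQ})$, so $\lambda \in \overline{\QQ}$ and there is a number field $L$ over which $\varphi$ may be assumed to be defined and which contains $\lambda$. The case $\lambda=0$ is immediate since $h(0)=0$, so I may assume $\lambda\neq 0$ and normalize the absolute values $|\cdot|_v$ on $L$ to extend the standard ones on $\QQ$. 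By the product formula, $h(\lambda)=h(1/\lambda) = \frac{1}{[L:\QQ]}\sum_v [L_v:\QQ_v]\log^+|\lambda|_v^{-1}$, so it suffices to bound each local term.

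To lower-bound $|\lambda|_v$ at each place, I invoke the PCF hypothesis, which in particular forces that, over every completion $\CC_v$, no critical point of $\varphi$ is strictly attracted to a periodic cycle. At archimedean $v$, Fatou's theorem then rules out every attracting cycle with $0 < |\lambda|_v < 1$, forcing $|\lambda|_v \geq 1$. At non-archimedean $v$ of residue characteristic $p$, Theorem~\ref{th:main} applied over $\CC_v$ (which has characteristic zero) gives $|\lambda|_v\geq \epsilon_{p,2}$; for $p\neq 2$ this yields $1$, and for $p=2$ this yields $|2|_v^2 = 1/4$.

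Putting these bounds together, $\log^+|\lambda|_v^{-1} = 0$ at every place $v \nmid 2$, and it is at most $\log 4$ at each place $v \mid 2$. Summing with the weights $[L_v:\QQ_v]/[L:\QQ]$ and using the identity $\sum_{v\mid 2}[L_v:\QQ_v] = [L:\QQ]$ yields the desired bound $h(\lambda)\leq \log 4$. There is no substantive obstacle beyond Theorem~\ref{th:main} itself; the only delicate point is keeping the normalizations consistent, so that the value of $\epsilon_{p,d}$ used in Theorem~\ref{th:main} matches the absolute value appearing in the Weil height formula, and in particular so that $\epsilon_{2,2}$ is exactly $1/4$.
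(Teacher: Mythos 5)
Your proof is correct and follows essentially the same route as the paper: the PCF hypothesis rules out strictly attracted critical points over every completion, Fatou's theorem gives $|\lambda|_v\geq 1$ at archimedean places, Theorem~\ref{th:main} gives $|\lambda|_v\geq\epsilon_{p,2}$ (which is $1$ for $p\neq 2$ and $1/4$ for $p=2$) at non-archimedean places, and summing $\log^+|\lambda|_v^{-1}$ with the standard local-degree weights yields $h(\lambda)\leq\log 4$. Your explicit reduction from $\CC$ to $\overline{\QQ}$ via Thurston rigidity is a small point the paper leaves implicit, but it is not a different argument.
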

By a basic property of the Weil height
(see Definition~\ref{def:height}), it follows
that the multiplier of a cycle of $\phi$ of length $n$ can take only finitely many values in $L$.
A celebrated theorem of McMullen
\cite{mcmullen} states that for $N$ chosen large enough relative to $d$,
the multipliers of all $n$-cycles with $1 \leq n \leq N$ essentially
give a parameterization of $\mathcal{M}_{d}(\CC)$ off of the locus
of flexible Latt\`es maps.
Thus, the finite list of
possible multipliers of cycles of PCF maps is
realized by only finitely many non-Latt\`{e}s points in
$\mathcal{M}_d$, proving Theorem \ref{th:pcfmain}.

In light of Theorem~\ref{th:pcfmain}, it is natural to wonder whether the finite set alluded to is effectively computable; the results in \cite{pcfpoly}, for example, provide an effective algorithm for computing the corresponding set in the polynomial case, and work of Goldfeld \cite{goldfeld1, goldfeld2} and Gross-Zagier \cite{gz} on the Gauss Conjecture provides an effective way of enumerating $j$-invariants of CM elliptic curves. Since the elements of a set of bounded height in affine space may be effectively enumerated, the question reduces to whether or not McMullen's theorem may be made effective, that is, whether there is an algorithm to compute the list of rational functions with a given multiplier spectrum.  In general, an effective result is not known, but when $\phi$ has degree two, work of Milnor \cite{milnor} and Silverman \cite{silverman} shows that $\phi$ is determined up to conjugacy by the multipliers of its fixed points. 
Corollary~\ref{cor:quadratic} can thus be used to explicitly compute, for
any $B\geq 1$, the finitely many conjugacy classes of quadratic
rational maps having a representative whose coefficients lie in an
extension of $\QQ$ of degree at most $B$. Recently Manes and Yap \cite{manes} have done just this in the case $B = 1$, finding twelve conjugacy classes of rational maps with a representative defined over $\QQ$.

If $\mathrm{Per}_n(\lambda)\subseteq \mathcal{M}_2$ is the curve consisting of rational maps of degree two, with a marked $n$-cycle of multiplier $\lambda$, then DeMarco has conjectured that $\mathrm{Per}_n(\lambda)$ contains only finitely many PCF maps, unless $\lambda=0$ (see \cite[p.~112]{barbados}).  Indeed, DeMarco points out that $\mathrm{Per}_n(\lambda)$ contains no PCF maps at all if $0<|\lambda|<1$, in the complex absolute value, by the aforementioned theorem of Fatou.  The calculations which prove Corollary~\ref{cor:quadratic} show that the situation is analogous for non-archimedean places.  Specifically, if $\lambda\in\CC^*$ is algebraic and $\mathrm{Per}_n(\lambda)$ contains a PCF map, then $|\lambda|_v\geq 1$ for every archimedean place $v$, as well as every $p$-adic place with $p\neq 2$, while $|\lambda|_v\geq \frac{1}{4}$ for every 2-adic place.  Note that there are no transcendental values  $\lambda\in\CC^*$ such that $\mathrm{Per}_n(\lambda)$ contains a PCF map, by Thurston's rigidity results.

The proof of Theorem \ref{th:main} uses the Berkovich projective line, and is developed in Sections \ref{sec:nonarch} and \ref{proof}.  Another outcome of the method is the following
fact about non-archimedean analysis, which may be of independent interest.
\begin{theorem}
\label{th:nondyn}
Let $K$ be a field as in Theorem~\ref{th:main},
let $\varphi(z)\in K(z)$ be a rational function of degree $d\geq 2$
for which $\phi(\infty)=\infty$,
and let $a\in K$.  Suppose that $a$ is not a pole of $\varphi$,
and let $r$
be the radius of the largest punctured disk about $z=a$ which is
disjoint from $\varphi^{-1}(\{\varphi(a),\infty\})$.
Then there is a critical point $\beta$ of $\varphi$ satisfying
$$|\varphi(\beta)-\varphi(a)|\leq \epsilon^{-1} r|\varphi'(a)|,$$
where $\epsilon$ is the same constant from Theorem~\ref{th:main}.
\end{theorem}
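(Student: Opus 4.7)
The plan is to pass to normalized coordinates centered at $a$ and apply Newton polygon analysis to the Taylor series of $\varphi$ there. First reduce to the case $\varphi'(a)\neq 0$, since otherwise $\beta=a$ trivially satisfies the conclusion. Choose $\pi\in K$ with $|\pi|=r$ and work with the rescaled rational function
$$\tilde\psi(w)=\frac{\varphi(a+\pi w)-\varphi(a)}{\pi\,\varphi'(a)}\in K(w),$$
so that $\tilde\psi(0)=0$, $\tilde\psi'(0)=1$, and $\tilde\psi(\infty)=\infty$. It suffices to find a critical point $\beta^*\in K$ of $\tilde\psi$ with $|\tilde\psi(\beta^*)|\leq\epsilon^{-1}$, since the theorem's conclusion then holds for $\beta=a+\pi\beta^*$ by scaling.

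Writing $\tilde\psi(w)=\sum_{k\geq 1}a_k w^k$, the hypothesis on $r$ says that $\tilde\psi$ is analytic and nonvanishing on $0<|w|<1$. The non-archimedean maximum principle applied to $\tilde\psi(w)/w$ then gives $|a_k|\leq 1$ for all $k\geq 1$, with $|a_1|=1$. Moreover, the maximality of $r$ (i.e.\ the existence of a zero or pole of $\tilde\psi$ on $|w|=1$) forces $|a_N|=1$ for some index $N$ with $2\leq N\leq d$: when $\tilde\psi$ has no pole on $|w|=1$, the reduction of $\tilde\psi$ at the Gauss point is a polynomial of degree $N\leq d$ over $\bar K$, and a dual reduction argument handles the pole case.

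The crux is then a Newton polygon analysis of $\tilde\psi'(w)=\sum_{k\geq 1}ka_k w^{k-1}$. Its leftmost slope identifies the smallest-modulus root $\beta^*\in K$, with
$$|\beta^*|=\min_{k\geq 2}|ka_k|^{-1/(k-1)},$$
and by construction $|ka_k|\cdot|\beta^*|^{k-1}\leq 1$ for every $k\geq 1$, which rearranges to
$$|a_k|\cdot|\beta^*|^k\leq\frac{|\beta^*|}{|k|}.$$
Set $\mu=\min_{1\leq m\leq d}|m|$, so that $\epsilon=\mu^d$. The condition $|a_N|=1$ with $|N|\geq\mu$ and $N\geq 2$ yields $|\beta^*|\leq|N|^{-1/(N-1)}\leq\mu^{-1/(N-1)}\leq\mu^{-1}$ (using $\mu\leq 1$); combining this with the maximum of the previous display over $k\leq d$ gives
$$|\tilde\psi(\beta^*)|\leq\max_{1\leq k\leq d}|a_k|\cdot|\beta^*|^k\leq\frac{|\beta^*|}{\mu}\leq\mu^{-2}=\epsilon^{-2/d}\leq\epsilon^{-1},$$
since $d\geq 2$ and $\epsilon\leq 1$.

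The main obstacle is to verify rigorously that (i) the root $\beta^*$ predicted by the power-series Newton polygon at $0$ is an honest critical point of $\tilde\psi$ in $K$, and not a spurious artifact of convergence issues, and (ii) only the coefficients $a_k$ with $k\leq d$ contribute to the relevant maximum. Both are straightforward when $\tilde\psi$ is a polynomial---in particular when $\varphi\in K[z]$, or more generally when $\varphi$ has no pole at distance $r$ from $a$---but require a more global argument in the general rational case. The likely path is to work on the Berkovich projective line: one analyzes the reduction of $\varphi$ at the type~II point $\zeta(a,r)$ corresponding to $\Dbar(a,r)$ and applies a Riemann-Hurwitz-style count on Berkovich disks to locate the critical point in the required region.
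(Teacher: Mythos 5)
You have correctly identified the two weak points of your own argument, and unfortunately they are not removable technicalities: they are the whole content of the theorem in the genuinely rational (non-polynomial) case. The Newton-polygon computation for $\tilde\psi'(w)=\sum_{k\ge 1}ka_kw^{k-1}$ only detects zeros lying \emph{inside the domain of convergence} of that Taylor series, which is the largest disk about $0$ free of poles of $\tilde\psi$. But your own estimates show that the predicted critical point always satisfies $|\beta^*|=\min_{k\ge2}|ka_k|^{-1/(k-1)}\ge 1$, because $|ka_k|\le|a_k|\le 1$. So $\beta^*$ always sits on or outside the unit circle, i.e.\ exactly at or beyond the boundary of the region where the series is guaranteed to converge; whenever $\varphi$ has a pole at distance between $r$ and $|\beta^*|r$ from $a$ (the generic situation for a rational map, and the reason $r$ is defined via preimages of $\infty$ as well as of $\varphi(a)$), the leftmost slope of the copolygon of the truncated series need not correspond to any actual zero of $\tilde\psi'$, and the evaluation $|\tilde\psi(\beta^*)|\le\max_k|a_k||\beta^*|^k$ is an evaluation of a divergent series. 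The restriction of that maximum to $k\le d$ is likewise unjustified: the Taylor coefficients $a_k$ for $k>d$ do not vanish for a rational function, the bound $|a_k||\beta^*|^k\le|\beta^*|/|k|$ blows up along powers of $p$, and your claim that $|a_N|=1$ for some $2\le N\le d$ is only argued in the pole-free case. So the proof is complete essentially only when $\varphi$ is a polynomial (or has no poles anywhere near $\Dbar(a,r)$), which you acknowledge.

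The paper closes exactly this gap, but not by a direct Berkovich ramification count at $\zeta(a,r)$ as you suggest; instead it reduces Theorem~\ref{th:nondyn} to Theorem~\ref{expbound} (the engine behind Theorem~\ref{th:main}) by a normalization trick on the \emph{target}. One picks $c\in K$ with $|c|$ just under $\epsilon/|\varphi'(a)|$, sets $h(z)=c(z-\varphi(a))+a$ and $\psi=h\circ\varphi$, so that $a$ becomes a fixed point of $\psi$ with multiplier $c\varphi'(a)$ of absolute value just under $\epsilon$. Theorem~\ref{expbound} then produces a disk $U\ni a$ on which $\psi$ is injective with all points attracted to $a$, together with a critical point $\beta$ of $\psi$ (equivalently of $\varphi$) with $\psi(\beta)\in U$; injectivity forces $U\setminus\{a\}$ to avoid $\psi^{-1}(\{a,\infty\})=\varphi^{-1}(\{\varphi(a),\infty\})$, hence $U\subseteq D(a,r)$, and applying $h^{-1}$ gives $\varphi(\beta)\in D(\varphi(a),|c|^{-1}r)$; letting $|c|\nearrow\epsilon/|\varphi'(a)|$ yields the stated bound. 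All of the global counting of zeros, poles, and critical points in the presence of nearby poles is thereby delegated to the proof of Theorem~\ref{expbound}, which is precisely the Berkovich-space argument your last paragraph gestures at. If you want to salvage your approach, the cleanest route is to prove this reduction rather than to redo the counting from scratch.
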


These results should be compared with recent results of Faber~\cite{faber} on a related topic.  He shows that if a rational function over a non-archimedean field has more than one zero in a closed disk $D$ of given radius, then there is a critical point in a disk not much larger (in fact in $D$ in the case $p = 0$ or $p > d$). We show the same, except that on the one hand our bound for ``not much larger'' in the case $0 < p \leq d$ is much less sharp, and on the other hand the critical point's corresponding critical value is in the interior of a disk that is also not too large compared with $D$ (in fact, a strictly smaller disk if $p = 0$ or $p > d$).

Theorem~\ref{th:main} is stated for fixed points, but we may apply it
to $\varphi^{n}$ and thereby extend the result to $n$-cycles.
It follows that an $n$-cycle strictly attracts a critical point when it has
multiplier $\lambda$ satisfying $0<|\lambda|<\epsilon_{p, d^{n}}$.
This bound is sufficient for the purposes of
Theorem~\ref{th:pcfmain}, for which we
fix $n$; but if we wish to consider cycles of arbitrary length, we
are hampered by the fact that $\epsilon_{p, d^{n}}$ becomes
arbitrarily small.  It turns out, however, that $n$th iterates of
rational functions of degree $d$ are not typical amongst rational
functions of degree $d^n$, and that we can significantly improve the
above estimates for $n$-cycles when $p>d$.

\begin{theorem}\label{introper}
Let $p$ and $K$ be as in Theorem \ref{th:main}, and suppose
either that $p=0$ or that
$\varphi(z)\in K(z)$ has degree $d<p$.  If $\gamma$ is
an $n$-periodic point of $\phi$ with
$$0 < |(\varphi^n)'(\gamma)| < 1,$$
then there is a critical point of $\phi$ that is strictly
attracted to the cycle containing $\gamma$.
\end{theorem}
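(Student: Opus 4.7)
The plan is to bootstrap Theorem~\ref{th:main} by exploiting the factorization $\varphi^n=\varphi\circ\cdots\circ\varphi$ into $n$ maps of degree $d$. A direct application of Theorem~\ref{th:main} to $\varphi^n$ (of degree $d^n$) would yield only the weaker bound $|\lambda|<\epsilon_{p,d^n}$, which can be far smaller than $1$; the improvement to the hypothesis $|\lambda|<1$ must come from using that each individual factor has degree $d<p$ (or $p=0$), so that Theorems~\ref{th:main} and~\ref{th:nondyn} hold for $\varphi$ itself with the sharp constant $\epsilon_{p,d}=1$. Setting $\gamma_i:=\varphi^i(\gamma)$, we have $\lambda=\prod_{i=0}^{n-1}\varphi'(\gamma_i)\neq 0$, so no $\gamma_i$ is a critical point of $\varphi$. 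I would argue by contradiction, assuming that no critical point of $\varphi$ is strictly attracted to the cycle $\{\gamma_0,\dots,\gamma_{n-1}\}$.

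The main step is to construct, around each $\gamma_i$, an open disk $D_i$ on which $\varphi$ restricts to an analytic bijection onto a disk about $\gamma_{i+1}$, with radii scaled by $|\varphi'(\gamma_i)|$. Theorem~\ref{th:nondyn} with $\epsilon=1$ guarantees such disks exist, and the contradiction hypothesis lets us arrange $\varphi(D_i)\subseteq D_{i+1\bmod n}$. Composing around the cycle, $\varphi^n\colon D_0\to D_0$ is an analytic bijection fixing $\gamma$ with contraction ratio $|\lambda|<1$, so $\gamma$ lies in a nontrivial invariant Berkovich attracting basin $U$ for $\varphi^n$. I would then invoke (the Berkovich analogue of) Rivera-Letelier's theory of non-archimedean Fatou components to show that any Fatou component containing a non-superattracting periodic point must contain a critical point of the map: the local Berkovich degree of $\varphi^n$ on $U$ satisfies a Riemann--Hurwitz identity, and strict contraction combined with trivial local degree would force $U$ to collapse to a point. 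The chain rule $(\varphi^n)'(z)=\prod_{k=0}^{n-1}\varphi'(\varphi^k(z))$ then pulls back any critical point of $\varphi^n$ in $U$ to a critical point of $\varphi$ lying in the cycle of Fatou components, hence strictly attracted to the cycle of $\gamma$---contradicting our assumption.

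The main obstacle is verifying rigorously that the disks $D_i$ lie inside a single invariant Berkovich Fatou component on which $\varphi^n$ has the expected local degree structure, and tracking critical orbits of $\varphi$ (rather than merely of $\varphi^n$) through the composition. I expect this to require careful Berkovich-analytic arguments closely paralleling those used in the proof of Theorem~\ref{th:main} in Section~\ref{proof}, relying crucially on the tameness of the ramification of $\varphi$ at each step of the composition (thanks to $d<p$); this tameness is precisely what allows the improved bound $|\lambda|<1$ in place of $|\lambda|<\epsilon_{p,d^n}$.
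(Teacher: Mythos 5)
You correctly identify both the obstacle (applying Theorem~\ref{th:main} to $\varphi^n$ gives only the useless constant $\epsilon_{p,d^n}$) and the resource that must be exploited (the factorization of $\varphi^n$ into tame factors of degree $d<p$). But the step that carries all the weight in your argument is a genuine gap: you invoke, as if known, that ``any Fatou component containing a non-superattracting periodic point must contain a critical point of the map.'' That statement is exactly what fails in non-archimedean dynamics and exactly what the paper is laboring to prove under suitable hypotheses: the paper's own example $\varphi(z)=z^p$ over $\CC_p$ has attracting, non-superattracting cycles (cycles of roots of unity of order prime to $p$, with multiplier of absolute value $|p|^n$) whose basins contain no critical point. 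Your parenthetical appeal to tameness is the right instinct, but you give no mechanism by which tameness rescues the claim; supplying that mechanism is precisely the content of Theorem~\ref{expbound}, whose proof is the Newton-copolygon counting argument of Section~\ref{proof}. The auxiliary reasoning you offer in its place --- ``strict contraction combined with trivial local degree would force $U$ to collapse to a point'' --- is not valid: an injective strict contraction of a disk into itself (e.g.\ $z\mapsto\lambda z$ on $D(0,1)$) collapses nothing, and the real difficulty is to produce a critical point whose critical \emph{value} lands in the attracting disk but off the cycle, which is where the constant $\epsilon$ enters. There is also a smaller misuse of Theorem~\ref{th:nondyn}: it outputs a critical point with a controlled critical value, not disks of bijectivity around the $\gamma_i$, and the inclusion $\varphi(D_i)\subseteq D_{i+1}$ does not follow from the contradiction hypothesis.

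The gap is fillable, and the correct route is close to what you gesture at. The paper observes that the hypothesis of Theorem~\ref{expbound} is stated in terms of the Berkovich tangent-direction multiplicities $\deg_{\zeta,\vec{v}}(\varphi^n)$, and that these are multiplicative under composition (Lemma~\ref{lem:mult}); hence if each factor has degree $d<p$ (or $p=0$), every $\deg_{\zeta,\vec{v}}(\varphi^n)$ is a product of integers between $1$ and $p-1$ and so has absolute value $1$ (Lemma~\ref{lem:compositions}). Theorem~\ref{expbound} then applies directly to $\varphi^n$ with effective constant $|\deg_{\zeta,\vec{v}}\varphi^n|^{d^n}=1$, yielding a critical point of $\varphi^n$ strictly attracted to $\gamma$; your final chain-rule step, which is correct, converts this into a critical point of $\varphi$ strictly attracted to the cycle. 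In short, the tameness of each factor should be fed into the hypothesis of the already-proved quantitative theorem, not into an unproved Fatou-component dichotomy.
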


It is a classical result in holomorphic dynamics over $\CC$,
proven by both Fatou \cite{fatou} and Julia \cite{julia},
that any rational function $\varphi(z)\in\CC(z)$ of degree $d\geq 2$
has at most $2d-2$ attracting cycles; the key fact is that each
such cycle attracts a critical point, albeit possibly not strictly.
(Shishikura \cite{shishikura} later extended the same bound
of $2d-2$ to all non-repelling cycles.)
Theorem \ref{introper} thus allows us to give a
non-archimedean analogue of the classical result.

\begin{cor}\label{cor:attracting2d-2}Let $K$ be a non-archimedean field with residue characteristic $p$, and suppose that $\varphi(z)\in K(z)$ has degree $d \geq 2$ with $p > d$ or $p = 0$.  Then $\varphi$ has at most $2d - 2$ attracting cycles.\end{cor}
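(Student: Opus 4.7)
The plan is to associate to each attracting cycle of $\varphi$ a distinct critical point of $\varphi$, and then conclude using the fact that a degree-$d$ separable rational function has at most $2d-2$ critical points in $\PP^1(\overline{K})$ by Riemann--Hurwitz. Separability and tame ramification are automatic here: the residue characteristic hypothesis forces $\charact K$ to be either $0$ or equal to the residue characteristic $p>d$, which is exactly the assumption needed to invoke Theorem~\ref{introper}. Since replacing $K$ by the completion of an algebraic closure can only enlarge the set of attracting cycles, I will assume throughout that $K$ satisfies the full hypotheses of Theorem~\ref{introper}.

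To each attracting $n$-cycle $\mathcal{C}=\{\gamma_0,\dots,\gamma_{n-1}\}$ of multiplier $\lambda$, I would attach a critical point $c_{\mathcal{C}}$ as follows. In the superattracting case $\lambda=0$, the chain rule identity $(\varphi^n)'(\gamma_0)=\prod_{k=0}^{n-1}\varphi'(\gamma_k)$ forces at least one factor to vanish, so some $\gamma_k$ is itself a critical point of $\varphi$; I take $c_{\mathcal{C}}:=\gamma_k$, which lies in $\mathcal{C}$. In the strictly attracting case $0<|\lambda|<1$, Theorem~\ref{introper} directly supplies a critical point of $\varphi$ that is strictly attracted to $\mathcal{C}$, and I take that point to be $c_{\mathcal{C}}$.

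It remains to verify that $\mathcal{C}\mapsto c_{\mathcal{C}}$ is injective. Distinct periodic cycles are disjoint, so two different superattracting cycles supply two different values of $c_{\mathcal{C}}$. A strictly attracted critical point is by definition not periodic, so it cannot coincide with a $c_{\mathcal{C}}$ lying inside any superattracting cycle. Finally, if a single critical point $c$ were strictly attracted to two cycles $\mathcal{C}$ and $\mathcal{C}'$ of lengths $n$ and $n'$, then the subsequence $\varphi^{Nm}(c)$ with $N=\lcm(n,n')$ would converge simultaneously into both cycles, forcing $\mathcal{C}\cap\mathcal{C}'\neq\emptyset$ and hence $\mathcal{C}=\mathcal{C}'$. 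Injectivity then gives the desired bound of $2d-2$.

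The only step requiring real care is the injectivity argument, and specifically the uniqueness of the cycle into which a strictly attracted point falls; the remainder is a bookkeeping combination of Theorem~\ref{introper} with Riemann--Hurwitz. No further non-archimedean analysis is needed, because all of that work is packaged inside Theorem~\ref{introper}.
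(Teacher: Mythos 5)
Your proposal is correct and follows essentially the same route as the paper, which deduces the corollary in one line from Theorem~\ref{introper} plus the fact that a tamely ramified degree-$d$ map has $2d-2$ critical points counted with multiplicity. You simply make explicit the details the paper leaves implicit (the superattracting case via the chain rule, and the injectivity of the assignment of a critical point to each cycle), and these details are all handled correctly.
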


Rivera-Letelier has proven a similar result \cite[Corollaire~4.7 and Corollaire~4.9]{jrl} without the hypotheses on the characteristic, with a bound of $3d-3$. There is no hope of removing the hypothesis $p>d$ or $p=0$ from Corollary~\ref{cor:attracting2d-2}, as the map $z^p$ has infinitely many attracting cycles over any field of residue characteristic $p$. Even under those hypotheses, there is also no hope of bounding the number non-repelling cycles, as Shishikura did in the complex case, since any map with good reduction over a non-archimedean field has no repelling cycles.  Still, since $\CC_p$ is isomorphic (as a field) to $\CC$, and because \emph{rationally} indifferent cycles (i.e., cycles whose multiplier is a root of unity) remain rationally indifferent under base change, it follows from Corollary~\ref{cor:attracting2d-2} and Shishikura's result that under the same hypotheses, the number of attracting cycles plus the number of rationally indifferent cycles is at most $4d-4$.

With current technology, we know of no way to improve the bound of $4d-4$, even if we use the more generous counting scheme introduced by Epstein \cite{epstein}. The complex bound of $2d-2$ is false in the $p$-adic case even when so restricted to attracting and rationally indifferent cycles: one of our anonymous referees pointed out that $\phi(z) = z^2 - 3/4 \in \CC_3(z)$ has attracting fixed points at $\infty$ and $3/2$ and a rationally indifferent one at $-1/2$.

Another application of Theorem~\ref{introper} is to the case of a global function field.
\begin{cor}\label{cor:thurstonmodp} Let $K$ be a function field of characteristic $p$, and suppose that either $p=0$ or $p > d$.  If $\varphi(z) \in K(z)$ is a PCF
map of degree $d$, then the multipliers of all periodic points of $\varphi$ lie in the algebraic closure of the prime subfield of $K$.
\end{cor}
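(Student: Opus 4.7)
The plan is to argue by contradiction: if a periodic cycle of $\varphi$ had multiplier $\lambda$ transcendental over the prime subfield of $K$, then Theorem~\ref{introper}, applied at a suitable non-archimedean place of a finite extension of $K$, would produce a critical point strictly attracted to that cycle, contradicting the PCF property. The bulk of the work is in selecting the right local setting; once that is fixed, the contradiction follows from the definitions.

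Let $\gamma \in \overline{K}$ be an $n$-periodic point of $\varphi$ with multiplier $\lambda = (\varphi^n)'(\gamma)$; the case $\lambda = 0$ is trivial, so assume $\lambda \ne 0$ and suppose toward contradiction that $\lambda$ is transcendental over the prime subfield. Enlarge $K$ to a finite extension $L \subseteq \overline{K}$ containing $\gamma$, $\lambda$, and every critical point of $\varphi$. Then $L$ is again a function field of characteristic $p$, and its constant field $k_L$ (the algebraic closure of the prime subfield inside $L$) remains algebraic over the prime subfield; in particular $\lambda \notin k_L$, so $\lambda$ is non-constant in $L$ over $k_L$. Consequently $\lambda$ has at least one zero on the curve attached to $L/k_L$, providing a place $v$ of $L$ with $v(\lambda) > 0$ and $v$ trivial on $k_L$. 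The residue field at $v$ is algebraic over $k_L$, hence has characteristic $p$, so the completion $\CC_v$ of an algebraic closure of $L_v$ is a complete, algebraically closed, non-archimedean field of residue characteristic $p$, to which Theorem~\ref{introper} applies by the standing assumption $p = 0$ or $p > d$.

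Viewing $\varphi$ over $\CC_v$, the cycle of $\gamma$ has multiplier satisfying $0 < |\lambda|_v < 1$, so Theorem~\ref{introper} furnishes a critical point $c$ of $\varphi$ strictly attracted to this cycle. But $\varphi$ is PCF, so the forward orbit of $c$ is finite; consequently the sequence $\varphi^{nm}(c)$ takes finitely many values, and its convergence to a point of the cycle must occur by eventual equality. Further iteration then gives $\varphi^\ell(c) = \gamma$ for some $\ell \ge 1$, contradicting strict attraction. The conclusion is that $\lambda$ lies in the algebraic closure of the prime subfield.

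The main obstacle is the local setup, specifically guaranteeing that the place $v$ witnessing $0<|\lambda|_v<1$ has residue characteristic equal to $\charact K$, so that the hypothesis $p = 0$ or $p > d$ survives the completion and Theorem~\ref{introper} can be invoked. Restricting to geometric places of $L$ (those trivial on $k_L$) handles this: their residue fields remain algebraic over the prime subfield, so the residue characteristic agrees with $p$. Once this local picture is secured, the argument is a clean application of Theorem~\ref{introper} together with the observation that a finite orbit cannot realize strict attraction.
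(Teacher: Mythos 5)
Your argument is essentially the paper's: both proofs locate a place $v$ of (a finite extension of) $K$, trivial on the constants, at which the non-constant multiplier $\lambda$ satisfies $0<|\lambda|_v<1$, and then invoke Theorem~\ref{introper} over $\CC_v$ to produce a critical point strictly attracted to the cycle, contradicting post-critical finiteness; the closing observation that a finite critical orbit cannot be strictly attracted is also how the paper derives the contradiction. The one place where your write-up is thinner than the paper's is the phrase ``the curve attached to $L/k_L$'': a function field in the sense of this corollary may have transcendence degree greater than one over its constant field, in which case there is no such curve and the existence of a geometric place with $v(\lambda)>0$ needs justification. The paper handles this by first reducing to transcendence rank one, choosing a curve inside the positive-dimensional PCF subvariety $Y_\phi\subseteq\mathcal{M}_d$ on which some symmetric function of the multipliers is non-constant and passing to its generic point; alternatively, one can argue directly that on a normal projective model of $L$ over $k_L$ a non-constant rational function has a non-trivial divisor of zeros, and the associated divisorial valuation is trivial on $k_L$ and has residue field of characteristic $p$, so your local argument applies verbatim. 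With that point patched, your proof is correct and coincides with the paper's.
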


In particular, Corollary \ref{cor:thurstonmodp} implies that if McMullen's Theorem holds over the algebraic closure of the prime subfield of $K$, then every PCF map in $K$ is Latt\`{e}s or isotrivial. Over $\CC$, McMullen's proof uses Thurston rigidity and therefore we do not obtain anything new. However, in some cases a McMullen-type result is known without the use of Thurston rigidity, namely for quadratic rational maps over a global function field of odd characteristic. In such cases, we deduce that any PCF map that is not a flexible Latt\`es map is in fact defined over an algebraic extension of the field of constants, after a change of coordinates (see Corollary \ref{rigid2}).

Finally, we can reverse the question of Theorem~\ref{th:main} and ask what happens around repelling periodic points. It turns out that there exist PCF maps with repelling
points, even when the residue characteristic is $0$ or $> d$, where such maps could not have attracting points that are not superattracting. In personal communication, Rivera-Letelier
points to the example
\begin{equation}\label{rlex}
\phi(z) = -45\frac{3z + 5}{z^{2}(z - 9)} \in\CC_5(z)
\end{equation}
which is PCF but has two repelling fixed points --- one with absolute value $1$, and one with absolute value $|5|<1$.

However, if we consider polynomials only, it turns out that PCF maps do not have repelling points when the residue characteristic is $0$ or $> d$, and in fact they necessarily have potentially good
reduction. We use Newton polygons to prove this, in Theorem \ref{integrality}. The same method can be used to reprove and slightly generalize a result of
Epstein~\cite{epstein2}, which states that in residue characteristic $p$, PCF polynomials of $p$-power degree have potentially good reduction.

The structure of this article is as follows. In Section \ref{elaboration}, we give background results and deduce Theorem \ref{th:pcfmain} and Corollary \ref{cor:quadratic} from Theorem \ref{th:main}. In Section \ref{sec:nonarch}, we recall some useful facts about Berkovich space and non-archimedean analysis, and develop several lemmas that are instrumental in the proof of Theorem \ref{th:main}. In Section \ref{proof}, we complete the proofs of Theorems \ref{th:main} and \ref{th:nondyn}. Section \ref{sec:sharp} contains a discussion of the sharpness of the bound $\epsilon_{p,d}$ in Theorem \ref{th:main}. In Section \ref{perpts}, we prove Theorem \ref{introper} and deduce Corollary \ref{cor:thurstonmodp} from it. In Section~\ref{goodreduction} we address repelling periodic points of PCF maps. 

%%%%%%%%%%%%%%%%%%%%%%%%%%%%%%%%%%%%%%%
%%%%%%%%%%%%%%%%%%%%%%%%%%%%%%%%%%%%%%%
%%%%%%%%%%%%%%%%%%%%%%%%%%%%%%%%%%%%%%%
%%%%%%%%%%%%%%%%%%%%%%%%%%%%%%%%%%%%%%%
%%%%%%%%%%%%%%%%%%%%%%%%%%%%%%%%%%%%%%%
%%%%%%%%%%%%%%%%%%%%%%%%%%%%%%%%%%%%%%%
%%%%%%%%%%%%%%%%%%%%%%%%%%%%%%%%%%%%%%%

\section{Background on multipliers and the moduli space of dynamical systems} \label{elaboration}

We begin with some remarks on the moduli space $\mathcal{M}_{d}$ of dynamical systems, referring the reader to \cite[Chapter~4]{ads} for more details.  Let
$\Rat_{d}$ be the set of rational functions $\varphi: \PP^{1} \to \PP^{1}$, which is naturally identified with an open subset of $\PP^{2d+1}$ by simply taking the
coefficients of the numerator and denominator of $\varphi$ and eliminating the locus where the numerator and denominator have a common root.  Then $\Rat_{d}$ is a
variety defined over $\QQ$.  Since $\operatorname{Aut} (\PP^1)\cong \PGL_{2}$ acts naturally on rational functions by conjugation, representing a change of
coordinates, it makes sense to consider $\Rat_{d}$ up to this action by $\PGL_{2}$, prompting one to define
\[\mathcal{M}_{d} = \Rat_{d}/\PGL_{2}.\]  In \cite[Section 4.4]{ads}, it is
shown that $\mathcal{M}_{d}$ is an algebraic variety defined over $\ZZ$, and moreover, that if $K$ is an algebraically closed field, then $\mathcal{M}_{d}(K)$
consists precisely of the orbits of $\Rat_{d}$ under the conjugation action of $\PGL_{2}(K)$.

\begin{defn} The map $\varphi: \PP^{1} \to \PP^{1}$ is called a \emph{Latt\`{e}s map} if there is an elliptic curve $E$, a morphism $\alpha:E\to E$, and a finite
separable map $\pi$ such that the following diagram commutes:
\begin{equation*}
  \xymatrix@R+2em@C+2em{
  E \ar[r]^{\alpha} \ar[d]_{\pi} & E \ar[d]^{\pi} \\
  \PP^{1} \ar[r]_{\varphi} & \PP^{1}
  }
\end{equation*}
We say that $\varphi$ is a \emph{flexible Latt\`{e}s map} if it is obtained by taking $\pi$ to be the usual double-cover and taking $\alpha(P)=[m]P+T$, where $[m]$ is multiplication by an integer $m > 1$ and $T$ is a $2$-torsion element of $E$.
\end{defn}

Latt\`{e}s maps are treated more completely in \cite[Sections~6.4 and~6.5]{ads}, where it is shown that non-isomorphic elliptic curves yield non-conjugate Latt\`es maps \cite[Theorem~6.46]{ads}, and that Latt\`es maps are PCF \cite[Proposition~6.45]{ads}. Thus if we fix $m$, take $T$ to be the identity, and let $E$ vary, the corresponding flexible Latt\`es maps descending from $\alpha(P) = [m]P$ give a curve in $\mathcal{M}_{m^{2}}$ that consists of PCF maps.

The dimension of $\Rat_{d}$ is $2d+1$, and that of $\mathcal{M}_{d}$ is $2d-2$, since $\dim\PGL_{2} = 3$ and the $\PGL_{2}$-automorphism group of each $\varphi \in
\Rat_{d}$ is finite \cite{levy, tepper}. Since a rational function of degree $d$ has $2d-2$ critical points, we expect that each set of critical orbit relations $\varphi^{m_i}(\zeta_i) = \varphi^{n_i}(\zeta_i)$ on the orbits of the critical points $\zeta_i$ of $f$, with $m_i \neq n_i$, will have only finitely many solutions.   As it turns out, all of the flexible Latt\`es maps in a given family have the same post-critical behavior, and thus they form a
counterexample to this \emph{a priori} expectation. However, it follows from a deep result of Thurston \cite{thurston} that these are the only exceptions in
$\mathcal{M}_{d}(\CC)$.  We refer to this result as Thurston rigidity, though Thurston's actual result is considerably more broad.

\begin{theorem}[\textbf{Thurston rigidity}] \label{rigidity}
Apart from the flexible Latt\`es maps, there are only finitely many 
conjugacy classes of rational maps in $\mathcal{M}_{d}(\CC)$ satisfying a given set of critical orbit relations.  Moreover, all such conjugacy classes
have a representative defined over $\overline{\QQ}$.
\end{theorem}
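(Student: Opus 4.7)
The plan is to follow Thurston's original Teichm\"uller-theoretic approach. Given a set of critical orbit relations, any rational map $\varphi\in\mathcal{M}_{d}(\CC)$ satisfying them determines an orientation-preserving postcritically finite topological branched self-covering $f\colon S^{2}\to S^{2}$ with finite postcritical set $P$. First I would observe that, up to isotopy rel $P$, there are only finitely many such combinatorial types compatible with the prescribed relations, since the degree and the local degrees at the ramification points are all bounded. To each combinatorial type one attaches the Teichm\"uller space $\mathcal{T}(S^{2},P)$ together with the Thurston pullback map $\sigma_{f}\colon\mathcal{T}(S^{2},P)\to\mathcal{T}(S^{2},P)$ obtained by pulling back complex structures via $f$; rational representatives of the combinatorial class correspond, up to M\"obius conjugacy, to fixed points of $\sigma_{f}$.

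The key step would be to invoke Thurston's theorem that $\sigma_{f}$ is a weak contraction of the Teichm\"uller metric and admits at most one fixed point, provided that $f$ has hyperbolic orbifold. The parabolic-orbifold cases form a short explicit list whose generic members are precisely the flexible Latt\`es family. After excising the flexible Latt\`es locus, uniqueness of the fixed point combined with finiteness of combinatorial types gives finitely many non-Latt\`es conjugacy classes in $\mathcal{M}_{d}(\CC)$ satisfying the prescribed relations.

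For descent to $\overline{\QQ}$, I would argue algebraically. Each relation $\varphi^{m_{i}}(\zeta_{i})=\varphi^{n_{i}}(\zeta_{i})$ is a system of polynomial identities on the coefficients of $\varphi$ with $\ZZ$-coefficients, so its vanishing locus is a $\QQ$-subscheme $\widetilde{Z}\subseteq\Rat_{d}$. Passing to the quotient by $\PGL_{2}$ produces a $\QQ$-subscheme $Z\subseteq\mathcal{M}_{d}$. The flexible Latt\`es locus is itself $\QQ$-rational (it is cut out by the elliptic parametrization with $\alpha(P)=[m]P$), and the previous step shows that its complement in $Z$ has only finitely many $\CC$-points. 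A zero-dimensional $\QQ$-scheme has all of its closed points defined over $\overline{\QQ}$, which is the conclusion.

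The hard part will be the contraction and uniqueness theorem for $\sigma_{f}$ itself. One studies the coderivative $D\sigma_{f}^{*}$ acting on the Banach space of integrable holomorphic quadratic differentials on $S^{2}\setminus P$, and must show that its operator norm is strictly less than one whenever $f$ has hyperbolic orbifold. This proceeds by analyzing invariant multicurves on $S^{2}\setminus P$ to locate ``Thurston obstructions'', and by showing that the flexible Latt\`es family is the unique source of a $D\sigma_{f}^{*}$-eigenvalue of modulus one. Essentially all of the technical weight of the proof lives here; once this analytic core is in hand, the combinatorial enumeration and the algebraic descent step are comparatively formal.
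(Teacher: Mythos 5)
The paper does not prove this theorem at all: it is quoted as a known deep result, with the proof attributed to Thurston via Douady--Hubbard \cite{thurston} and the deduction of this particular statement to Brezin \emph{et al.}\ \cite[Corollary~3.7]{pilgrim}. So you are attempting far more than the paper does, and your outline of the Teichm\"uller-theoretic core (the pullback map $\sigma_f$, its contraction property for hyperbolic orbifold, uniqueness of the fixed point, the parabolic-orbifold exceptions containing the flexible Latt\`es family) is a faithful summary of that literature rather than a proof; the entire analytic content is deferred in your last paragraph.

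There is, however, a genuine gap in the one step you do claim to supply yourself: the assertion that ``up to isotopy rel $P$, there are only finitely many combinatorial types compatible with the prescribed relations, since the degree and the local degrees are bounded.'' What bounded degrees give you is finitely many \emph{ramification/postcritical portraits}, not finitely many isotopy or Thurston classes of branched covers realizing a portrait. For $|P|\geq 4$ the pure mapping class group of $(S^2,P)$ is infinite, and pre- or post-composing a branched cover with Dehn twists produces infinitely many isotopy classes rel $P$ with the identical portrait; the finiteness of \emph{realizable} Thurston classes with a given portrait is essentially equivalent to the theorem you are trying to prove, so your argument is circular at this point. The standard deduction (the one in \cite[Corollary~3.7]{pilgrim}) avoids this: the critical orbit relations cut out an algebraic subvariety $Z$ of $\mathcal{M}_d$; along any connected positive-dimensional component of $Z$ the maps are pairwise Thurston equivalent (a path in the component is an isotopy), hence by rigidity pairwise conjugate unless the orbifold is parabolic, forcing each non-Latt\`es component to be a single point; a variety has finitely many components. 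Your descent to $\overline{\QQ}$ is fine once this is repaired, since the finite set $Z\setminus\mathcal{L}$ is then Galois-stable and defined over $\QQ$.
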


Brezin \emph{et al.} \cite[Corollary~3.7]{pilgrim} explain how Thurston's result in \cite{thurston} implies Theorem~\ref{rigidity}.

Another main ingredient in the derivation of many of our corollaries is a celebrated result of McMullen \cite{mcmullen}.  Let $\Lambda_{n}: \mathcal{M}_{d} \to
\AA^{k_{n}}$ denote the \emph{multiplier spectrum map} \label{specm}, i.e., the map sending $\varphi$ to the unordered set of multipliers of its period-$n$ cycles (more precisely, to
the elementary symmetric functions in the multipliers).
\begin{theorem}[\textbf{McMullen}] \label{th:mcmullen}
Fix $d \geq 2$.  For sufficiently large $n$ the map
\begin{equation} \label{specmap}
\Lambda_{1} \times \cdots \times \Lambda_{n}: \mathcal{M}_{d}(\CC) \to \AA^{k_{1} + \cdots + k_{n}}(\CC)
\end{equation}
is finite-to-one away from the flexible Latt\`{e}s curves.
\end{theorem}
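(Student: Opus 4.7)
The plan is to control the fibers of the multiplier spectrum map via quasiconformal deformation theory, following the overall architecture of McMullen's original argument. Suppose the theorem fails, so that for every $n$ the map $\Lambda_1 \times \cdots \times \Lambda_n$ has positive-dimensional fibers outside the flexible Latt\`es locus. A Noetherian descent on $\mathcal{M}_d$ then produces an irreducible subvariety $V \subset \mathcal{M}_d$ of positive dimension, not contained in the Latt\`es locus, along which every $\Lambda_n$ is constant; restricting to a smooth curve in $V$ and passing to a universal cover would give a holomorphic family $\{\varphi_t\}_{t\in\Delta}$ of degree-$d$ rational maps with all periodic multipliers independent of $t$.

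Next I would pass from constancy of multipliers to a genuine complex-analytic equivalence. Because periodic cycles persist under small holomorphic deformations and their multipliers vary holomorphically in $t$, the assumed constancy lets one define a holomorphic motion of the set of periodic points. This set is dense in the Julia set of $\varphi_0$ by Fatou's theorem, so Slodkowski's extension theorem yields a holomorphic motion of all of $\PP^1$, and the Ma\~n\'e--Sad--Sullivan theorem then forces every member of the family to be quasiconformally conjugate to $\varphi_0$. Thus the whole family lies in a single qc-conjugacy class.

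Interpreting qc deformations through Sullivan's dictionary, the tangent space at $\varphi_0$ to its qc-conjugacy class, modulo M\"obius conjugation, is identified with the space of $\varphi_0$-invariant Beltrami differentials on $\PP^1$. Sullivan's no-wandering-domains theorem and the classification of periodic Fatou components (attracting, parabolic, Siegel, Herman) bound the contribution from the Fatou set by a finite-dimensional quantity that is fully absorbed into finitely many parameters. Hence a genuinely positive-dimensional family must contribute a nonzero invariant Beltrami differential supported on a positive-measure subset of the Julia set, equivalently a measurable $\varphi_0$-invariant line field on the Julia set.

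The final step is the deep one and is the main obstacle. I would invoke McMullen's rigidity theorem: a rational map of degree at least two admits a measurable invariant line field on a positive-measure subset of its Julia set only if it is a flexible Latt\`es map. This contradicts the choice of $V$ and finishes the proof. Everything upstream of this last step is essentially a transport of black-box qc-deformation theory (holomorphic motions, Slodkowski, Ma\~n\'e--Sad--Sullivan, Sullivan's dictionary); the genuinely hard complex-analytic content, involving distortion estimates for iterated pullbacks, shrinking neighborhoods in the Julia set, and the ergodic theory of rational maps, lives entirely inside the no-invariant-line-field theorem, and I do not see a route that avoids this input in the complex setting.
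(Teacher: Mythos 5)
First, a point of comparison: the paper does not prove Theorem~\ref{th:mcmullen} at all --- it is imported as a black box from \cite{mcmullen}, so there is no internal argument to measure your proposal against. Judged as a reconstruction of McMullen's proof, your outline has the right architecture: Noetherian reduction to a positive-dimensional algebraic family with constant multiplier spectrum, stability and a holomorphic motion of the (repelling) periodic points, quasiconformal conjugacy, and the dichotomy between invariant Beltrami differentials supported on the Fatou set and invariant line fields on the Julia set. But two of your steps are not the routine ``transport of black boxes'' you describe them as.

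The lesser issue is the Fatou step: the conclusion you need is that the fiber of $\Lambda_1\times\cdots\times\Lambda_n$ is \emph{finite}, and ``bounded by a finite-dimensional quantity that is fully absorbed into finitely many parameters'' does not deliver that. Constancy of multipliers does kill deformations of attracting basins (the quotient torus $\CC^*/\langle\lambda\rangle$ is pinned down), but parabolic basins, Siegel disks, and especially Herman rings carry continuous conformal moduli that are invisible to the multiplier spectrum; ruling out a positive-dimensional algebraic family supported in these requires the Teichm\"uller theory of rational maps (McMullen--Sullivan) or McMullen's original argument, and is a genuine piece of the proof. The serious issue is your final step. The statement you invoke --- a measurable invariant line field on a positive-measure subset of the Julia set forces a flexible Latt\`es map --- is the \emph{no invariant line fields conjecture}, which is open, not a theorem. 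What is actually proved, and what McMullen uses, is the rigidity of line fields that are \emph{holomorphic} on some open set meeting the Julia set (equivalently, the classification, via the orbifold Euler characteristic, of maps admitting an invariant meromorphic quadratic differential); the substantive work in \cite{mcmullen} is then to show that the line field produced by a positive-dimensional \emph{algebraic} stable family is of this restricted type. As written, your argument rests on an open problem rather than on known results, so the gap sits exactly at the step you identify as the deep one.
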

One can compute the multiplier spectra corresponding to Latt\`{e}s maps in a given family \cite[Proposition 6.52]{ads}, and see that these families are isospectral.  Thus, the map in \eqref{specmap} compresses each such family down to a single point.  The fact that the flexible Latt\`{e}s maps truly are exceptional in McMullen's theorem is crucial for the proof of Theorem~\ref{th:pcfmain}, since $\mathcal{M}_{d}(\CC)$ contains infinitely many flexible Latt\`{e}s maps defined over any given number field, as long as $d$ is a perfect square.

The last preliminary notion we require is the standard Weil height on $\overline{\QQ}$.
\begin{defn} \label{def:height}
The \emph{(logarithmic) Weil height}, or simply the \emph{height},
of an algebraic number $\alpha \in K$, where $K$ is a finite extension of $\QQ$, is defined by
\begin{equation}\label{eq:heightdef}
h(\alpha) = \sum_{v\in M_{K}}\frac{[K_{v}:\QQ_{v}]}{[K:\QQ]}\log\max\{1, |\alpha|_{v}\},
\end{equation}
where $M_{K}$ denotes the set of absolute values of $K$, normalized in the standard way (i.e. if $v | p$ then $|p|_v = 1/p$ and if $v | \infty$ then $v$ is the standard absolute value on $\CC$), and $K_v$ denotes the completion of $K$ with respect to the absolute value $v$.
\end{defn}
It is routine to check that the quantity \eqref{eq:heightdef}
does not depend on the field $K$, and hence~\eqref{eq:heightdef}
gives a well-defined function
$h:\overline{\QQ}\to\RR$.
The definition for heights over function fields is completely analogous.
We refer the reader to \cite[Chapter~3]{ads} for more details on Weil heights.

Northcott's
fundamental result in arithmetic geometry~\cite{northcott}
states that there are only finitely many algebraic numbers of bounded height and bounded degree. In other words,
for every pair of non-negative integers $A$ and $B$, there are only finitely many values $\alpha \in \overline{\QQ}$ satisfying both $$h(\alpha)\leq A\quad\text{ and
}\quad[\QQ(\alpha):\QQ] \leq B.$$
(The analogous result holds for function fields if and only if the field of constants is finite.) Moreover, this finite set of
points is effectively computable, since a bound on the height of an algebraic number yields a bound on the size of the coefficients of its minimal polynomial over $\ZZ$.  Finally, a
classical result of Kronecker implies that, over number fields, $h(\alpha)=0$ precisely if $\alpha=0$ or $\alpha$ is a root of unity (in dynamical terminology: precisely if $\alpha$ is preperiodic for $z\mapsto z^2$).  In the function field
setting, the condition $h(\alpha)=0$ is equivalent to $\alpha$ being a constant.

In order to justify our later focus on non-archimedean dynamics, rather than complex or global dynamics, we will now show how Theorem~\ref{th:pcfmain} follows from Theorem~\ref{th:main}. The remainder of the paper will be devoted to non-archimedean considerations related to the proof of Theorem~\ref{th:main}.
\begin{proof}[Proof of Theorem~\ref{th:pcfmain} and Corollary~\ref{cor:quadratic}]
Let $\varphi(z)\in\overline{\QQ}(z)$ be a PCF function of degree $d\geq 2$, and let $\lambda$ be the multiplier of a fixed point $\gamma$ of $\varphi$. We first wish to show that $h(\lambda)$ is bounded.  If $\lambda=0$, then $h(\lambda)=0$, and hence we may assume that $\lambda\neq 0$.

Let $K/\QQ$ be a finite extension containing the coefficients of $\varphi$, as well as its fixed points, and let $M_K$ denote the set of places of $K$.
For each $v\in M_K$, let $\CC_v$ denote the completion of the algebraic closure of the $v$-adic completion of $K$.  The key
observation is that there can be no critical point $\zeta\in\CC_v$ of $\varphi$ that is strictly attracted to a fixed  point of $\varphi$.  If that were the case, then
 that critical point would have an infinite forward orbit,
contradicting our assumption about $\varphi$.

If $v$ is archimedean, and therefore $\CC_v=\CC$,
then we may apply the above-mentioned result of Fatou that every attracting cycle attracts an infinite critical orbit, unless it is super-attracting. From this conclude that, since no infinite critical orbit exists to be attracted to $\gamma$, we must have
\begin{equation}\label{archbound}|\lambda|_v \geq 1.\end{equation}
If $v$ is non-archimedean, then $v$ extends some $p$-adic absolute value
on $\QQ$, and we write $v\mid p$.
In this case, we may similarly apply Theorem \ref{th:main} to
$\varphi$ over $\CC_v$, and thus obtain that $$|\lambda|_v \geq \epsilon_{p, d}.$$
We now invoke the standard fact \cite[Proposition~3.2]{ads} that for any prime
$p$ we have $$\sum_{\substack{v\in M_K\\ v\mid p}}\frac{[K_{v}:\QQ_{v}]}{[K:\QQ]}=1,$$ to obtain from Definition~\ref{def:height}
\begin{align}
h(\lambda) = h(\lambda^{-1}) & = \sum_{v\in M_{K}}\frac{[K_{v}:\QQ_{v}]}{[K:\QQ]}\log\max\{1, |\lambda^{-1}|_{v}\} \notag \\
& \leq \sum_{p\text{ prime}}\sum_{\substack{v\in M_K\\ v\mid p}}\frac{[K_{v}:\QQ_{v}]}{[K:\QQ]}\log \epsilon_{p, d}^{-1}
= \sum_{p \leq d} \log \epsilon_{p, d}^{-1},  \label{eqtwo}
\end{align}
where the first equality is a standard result of the product formula
(see \cite[Proposition~3.3]{ads}),
the inequality comes from discarding the archimedean places in light
of \eqref{archbound}, and the final equality is because
$\epsilon_{p,d}=1$ for all $p>d$. In the case $p=d=2$, the formula in Theorem~\ref{th:main} gives
\[\epsilon_{2, 2}=\min\left\{|m|^2:1\leq m\leq 2\right\}=\frac{1}{4},\] and so from the estimate \eqref{eqtwo} we immediately obtain Corollary~\ref{cor:quadratic}.

 In general, note that the bound in \eqref{eqtwo} is finite and depends only on $d$.
Thus if $\lambda$ is a multiplier of a fixed point of the PCF map $\varphi(z)\in\overline{\QQ}(z)$, we have a bound for $h(\lambda)$ which depends only on $d$. Applying this to $\varphi^n(z)$, for any $n$, we see that if $\lambda$ is the multiplier of a cycle of period $n$ for $\varphi$, then $h(\lambda)$ is bounded in terms of $d$ and $n$.

It follows from standard estimates that the elementary symmetric functions in the multipliers of $n$-cycles of a PCF map will also be of bounded height. In particular, if $\Lambda_n:\mathcal{M}_d\to\AA^{k_n}$ is the morphism taking a rational function to the elementary symmetric functions in the multipliers of its $n$-cycles, then there is an $N_{d, n}$ such that $h_{\PP^{k_n}}(\Lambda_n(\varphi))\leq N_{d, n}$ whenever $\varphi\in\mathcal{M}_d(\overline{\QQ})$ is PCF. Now, by Theorem~\ref{th:mcmullen} there exists an $n$ such that the map \[\Lambda=\Lambda_1\times \cdots\times\Lambda_n:\mathcal{M}_d\to\PP^{k_1}\times\cdots\times\PP^{k_n}\] is finite away from the set $\mathcal{L}$ of Latt\`{e}s maps.  Fixing such an $n$, we  define $h_{\mathcal{M}_d\setminus\mathcal{L}}$ to be the pull-back to $\mathcal{M}_d\setminus\mathcal{L}$ of the usual height on $\PP^{k_1}\times\cdots\times\PP^{k_n}$ by $\Lambda$. From the discussion above, we know that Latt\`{e}s maps live in a set which is  bounded with respect to this height. The second claim in Theorem~\ref{th:pcfmain} now follows from the standard Northcott property of the height on $\PP^{k_1}\times\cdots\times \PP^{k_n}$, since the map $\Lambda$ is defined over $\QQ$.
\end{proof}

Note that we can be quite explicit about the bound on $h(\lambda)$.
Recalling from Theorem~\ref{th:main} that
\[\epsilon_{p, d}=\min\left\{|m|^d_p:1\leq m\leq d\right\},\]
our height bound for multipliers of fixed points of PCF maps of degree
$d\geq 2$ becomes
\[h(\lambda)\leq d\sum_{p\leq d} \log\max\{|m|_p^{-1}:1\leq m\leq d\}=d\sum_{n\leq d}\Lambda(n),\]
where $\Lambda$ now denotes the von Mangoldt function \cite[Section~2.8]{aposotol}.  Since the prime number theorem is equivalent to the fact that $\sum_{n\leq x}\Lambda(n)$ is asymptotic to $x$, our upper bound is asymptotic to $d^2$, although the quality of the error term depends on which conjectures of analytic number theory one is prepared to adopt.

It remains to prove Theorems~\ref{th:main} and~\ref{th:nondyn}, which
we do in Section~\ref{proof}, and Theorem~\ref{introper}, which
we do in Section~\ref{perpts}.

%%%%%%%%%%%%%%%%%%%%%%%%%%%%%%%%%
%%%%%%%%%%%%%%%%%%%%%%%%%%%%%%%%%
%%%%%%%%%%%%%%%%%%%%%%%%%%%%%%%%%
%%%%%%%%%%%%%%%%%%%%%%%%%%%%%%%%%
%%%%%%%%%%%%%%%%%%%%%%%%%%%%%%%%%
%%%%%%%%%%%%%%%%%%%%%%%%%%%%%%%%%

\section{Background on non-archimedean analysis}
\label{sec:nonarch}
In this section, we summarize the definitions and results
on non-archimedean analysis, and especially Berkovich spaces,
that we will need to prove Theorem~\ref{th:main}.

Fix an algebraically closed non-archimedean field $K$ with
absolute value $|\cdot|$ as in Theorem~\ref{th:main}.
By an \emph{open disk} in $\PP^1(K)$ we mean
an open disk \[D(a,r)=\{x\in K : |x-a|<r\}\] in $K$
or the complement $\PP^1(K)\setminus \Dbar(a,r)$
of a closed disk in $K$.  Similarly, a \emph{closed disk}
in $\PP^1(K)$ is either a closed disk in $K$ or the complement
of an open disk in $K$.  In either case, we say the disk
is \emph{rational} if the radius $r>0$ lies in $|K^{\times}|$.

A \emph{closed} (respectively, \emph{open})
\emph{connected affinoid} is the intersection of finitely
many closed (respectively, open) disks in $\PP^1(K)$.
We say the affinoid is \emph{rational} if all the disks
in the intersection are rational.

Let $U\subseteq\PP^1(K)$ be a connected affinoid, and let
$h\in K(z)$ be a rational function of degree $d\geq 1$.
Then $h^{-1}(U)$ is the disjoint union of $1\leq \ell\leq d$
connected affinoids $V_1,\ldots, V_{\ell}$, where
for each $i=1,\ldots, \ell$, there is an integer $1\leq m_i\leq d$
such that $h$ maps $V_i$ everywhere $m_i$-to-one onto $U$.
Moreover, $m_1 + \cdots + m_{\ell}= d$.
(See \cite[Proposition~2.5.3]{rbthesis} or
\cite[Proposition~2.6]{jrl}, for example.)
The connected affinoids $V_1,\ldots, V_{\ell}$
are called the \emph{components} of $h^{-1}(U)$.
If $U$
is closed (respectively, open, rational), then every component $V_i$
of $h^{-1}(U)$ is also closed (respectively, open, rational).
Moreover, if $U$ is a disk and $m_i=1$, then $V_i$ is also a disk.
In addition, if $U\subseteq K$ is a finite disk and $h$ is
a polynomial, then each $V_i$ is also a disk.
For further information on affinoids and rigid analysis,
see \cite{BGR,conrad,FvP}.

The Berkovich projective line $\PBerk$ over $K$ is a certain space
of multiplicative seminorms on $K$-algebras.  It contains $\PK$
as a subspace but is path-connected, compact, and Hausdorff.
The Berkovich affine line over $K$ is defined to be
$\ABerk = \PBerk\setminus\{\infty\}$,
the Berkovich hyperbolic space is
$\HBerk=\PBerk\setminus\PK$.
The full definition of $\PBerk$ is rather involved;
for details, the interested reader may consult
Berkovich's original presentation in \cite{berk},
the thorough exposition in \cite{bakrum}, or the summaries
in \cite[Section~4]{benedetto}, 
\cite[Sections~6.1--6.3]{benedAZ},
\cite[Sections~2.1--2.2]{FRL},
and \cite[Section~5.10]{ads}.
Still, we present a general description here, without proofs.

Each point $\zeta\in\ABerk$
is associated to a multiplicative seminorm
on $K[z]$ extending $|\cdot|$, and we
denote this seminorm by $\|\cdot\|_{\zeta}$.
As a typical example, for each closed disk
$\Dbar(a, r)\subseteq K$ of finite radius $r>0$,
there is a corresponding point $\zeta(a, r)$ in Berkovich
space defined by
\[\|f\|_{\zeta(a, r)}=\sup\{|f(z)|: z\in \Dbar(a, r)\}.\]
Equivalently, if we write $f(z) = \sum_i c_i (z-a)^i$,
we have $\|f\|_{\zeta(a, r)}=\sup\{|c_i|r^i : i\geq 0\}$.
The point $\zeta(a,r)$ is said to be of type~II if $r\in |K^{\times}|$,
or of type~III if $r\in (0,\infty)\setminus |K^{\times}|$.
In other words, type~II Berkovich points correspond to rational
closed disks in $K$, and type~III Berkovich points correspond to
irrational closed disks in $K$.

Meanwhile, each $x\in K$ induces a seminorm $\|\cdot\|_x$
defined by $\|f\|_x = |f(x)|$.  Such seminorms are the type~I
points of $\ABerk$, and the mapping $K\to\ABerk$ by
$x\mapsto \|\cdot\|_x$ is a topological embedding.
There are also points of type~IV, corresponding to decreasing
chains of disks with empty intersection, but such points will
not concern us here.  The hyperbolic space $\HBerk$ consists
of the points of types~II, III, and~IV.

Any seminorm $\|\cdot\|_{\zeta}$ in $\HBerk$ is actually a norm on $K[z]$ and therefore may be extended
to $K(z)$ by setting
$\|f/g\|_{\zeta} = \|f\|_{\zeta} / \|g\|_{\zeta}$, which
is independent of the choice of polynomials $f,g\in K[z]$
representing the rational function $f/g$.
The same definition also makes sense at type~I points $x\in K$,
provided we allow
$\|f/g\|_{x}$ to take on the value $\infty$ if $g(x)=0$.
Meanwhile, we may define $\|\cdot\|_{\infty}$
at the one remaining point $\infty\in\PBerk$ by
setting $\|h(z)\|_{\infty} = \|h(1/z)\|_{0}$ for any
$h\in K(z)$.

Any point $\zeta\in\PBerk$ has a
radius $\rad(\zeta)\in [0,\infty]$, defined by
$$\rad(\zeta) = \inf\{ \|z-a\|_{\zeta} : a\in K\}.$$
In particular,
if $\zeta=\zeta(a,r)$ is a point of type~II or~III,
corresponding to the closed disk $\Dbar(a,r)$,
then $\rad(\zeta)=r$.
Meanwhile,
$\rad(a)=0$ for each type~I point $a\in K$,
and $\rad(\infty)=\infty$.
The reader should be warned that
the function $\rad:\PBerk\to [0,\infty]$
is not continuous, but only upper semicontinuous.
However, $\rad$ is continuous on line segments in $\PBerk$.
Specifically,
fix any point $a\in K$, and let $L_a$ be the line segment in $\PBerk$
from $a$ to $\infty$.  Then $\rad$ is continuous on $L_a$,
and in fact $\log\circ\rad:L_a \to [-\infty,\infty]$ is a homeomorphism,
with inverse $t\mapsto \zeta(a,\exp(t))$.

Given $\zeta\in\PBerk$, each connected component of
$\PBerk\setminus\{\zeta\}$ is called a \emph{tangent direction}
at $\zeta$.  If $\zeta=\zeta(a,r)$ is of type~III,
then it has two tangent directions: the component containing $\infty$
and the component containing $a$.  On the other hand, if
$\zeta=\zeta(a,|c|)$ is of type~II, then it has infinitely many
tangent directions: one containing $\infty$, and one containing
each point $a+cu$, as $u\in K$ ranges over a set of representatives
of the residue field of $K$.  A point of type~I or~IV has only
one tangent direction.

For any fixed nonzero rational function $h\in K(z)$,
the function $\zeta\mapsto \|h\|_{\zeta}$
from $\PBerk$ to $[0,\infty]$ is continuous.
(This statement is essentially the definition of the topology on $\PBerk$.)
For any fixed $a\in K$,
the graph of the function $\log t\mapsto \log\|h\|_{\zeta(a,t)}$ is
called the \emph{valuation polygon}
or \emph{Newton copolygon} of the rational function $h(z-a)$.
This function,
which is the composition of the homeomorphism
$\zeta(a,\exp(\cdot)):[-\infty,\infty]\to L_a$ with
the map $\zeta\mapsto\|h\|_{\zeta}$,
is continuous and piecewise linear, and the slope
of each of its segments is necessarily an integer.
More precisely,
for any point $\zeta=\zeta(a,r)$
of type~II or~III and any $b\in\PP^1(K)$, set
$N_a^+(h,\zeta(a,r),b)$ to be the nonnegative integer
\[N_a^+(h,\zeta(a,r),b) = \#\{z\in\Dbar(a, r):h(z)=b\},\]
counted with multiplicity,
and define $N_a^-(h, \zeta(a, r), b)$
similarly relative to the open disk $D(a, r)$.
Then the Newton copolygon function
$\log t \mapsto \log \|h\|_{\zeta(a,t)}$
from $[-\infty,\infty]$ to $[-\infty,\infty]$
has integer slope to the left of $\log r$ given by
\begin{equation}
\label{eq:leftslope}
N_a^-(h,\zeta(a,r),0)-N_a^-(h,\zeta(a,r),\infty),
\end{equation}
and to the  right of $\log r$ given by
\begin{equation}
\label{eq:rightslope}
N_a^+(h,\zeta(a,r),0)-N_a^+(h,\zeta(a,r),\infty).
\end{equation}
This integer is precisely the
Weierstrass degree (i.e., the degree of the term
of maximal absolute value) of the Laurent series expansion
$h(z) =\sum_{i\in\ZZ} c_i (z-a)^i$ on the annulus
$X^- = D(a,r)\setminus \Dbar(a,r-\epsilon)$
or
$X^+ = D(a,r+\epsilon)\setminus \Dbar(a,r)$,
respectively,
for sufficiently small $\epsilon>0$.
For further details on valuation polygons,
see the foundational work in \cite{robba}, as well as the
expositions in \cite[VI.1.6,VI.3.3]{robert} and
\cite[Section~6]{benedetto}.
For example, a proof of the piecewise linearity statement
above may be found in \cite[Section~3]{robba} or
\cite[VI.1.6ff]{robert}, albeit not phrased in the language
of Berkovich spaces.

Any rational function $\phi\in K(z)$ induces a continuous function
$\phi:\PBerk\to\PBerk$, where for each $\zeta\in\PBerk$, the image
$\phi(\zeta)$ is the seminorm defined by
$$\|h\|_{\phi(\zeta)} = \|h\circ\phi\|_{\zeta}
\qquad
\text{for all } h\in K(z).$$
It is easy to check that for type~I points $\zeta=x\in\PK$, this
definition of $\phi(x)$ coincides with the usual action of
$\phi$ on $\PK$.  If $\phi$ is nonconstant, then for any
$\zeta\in\PBerk$, its image $\phi(\zeta)$ is a point of the same type.
If $\zeta=\zeta(a,r)\in\PBerk$ is of type~II or~III,
then for each tangent direction $\vec{v}$ at $\zeta$, $\phi$
induces a tangent direction $\phi_*(\vec{v})$ at $\phi(\zeta)$,
as follows.  If $\vec{v}$ is the tangent direction containing $\infty$,
then for all sufficiently small $\epsilon>0$, the image
$\phi(X)$ of the annulus $X=D(a,r+\epsilon)\setminus\Dbar(a,r)$
is contained in a single tangent direction $\vec{w}$ at $\phi(\zeta)$.
Otherwise, if $\vec{v}$ is the tangent direction containing
$b\in\Dbar(a,r)$, then
for all sufficiently small $\epsilon>0$, the image
$\phi(X)$ of the annulus $X=D(b,r)\setminus\Dbar(b,r-\epsilon)$
is contained in a single tangent direction $\vec{w}$ at $\phi(\zeta)$.
In either case, the image $\phi(X)$ is an annulus of the form
either $D(c,s+\delta)\setminus\Dbar(c,s)$
or $D(c,s)\setminus\Dbar(c,s-\delta)$,
the image point $\phi(\zeta)$ may be written as $\phi(\zeta)=\zeta(c,s)$,
and the image direction is defined to be $\phi_*(\vec{v})=\vec{w}$,
the direction at $\phi(\zeta)$ containing $\phi(X)$.

Just as $\phi$ maps points of $\PK$ to one another with multiplicity,
$\phi_*$ also maps tangent directions to one another with multiplicity.
Indeed,
the annulus $X$ in the previous paragraph maps to its image with
some multiplicity $1\leq m\leq \deg\phi$ that is independent of
the sufficiently small $\epsilon>0$.  We define
the multiplicity $\deg_{\zeta,\vec{v}}(\phi)$ of $\phi$
in the direction $\vec{v}$ at $\zeta$ to be this integer $m$.
If $\phi(\zeta)=\zeta(0,s)$ and $\phi_*(\vec{v})$ is the direction
at $\zeta(0,s)$ of either $0$ or $\infty$, then
$\deg_{\zeta,\vec{v}}(\phi) = |m|_{\infty}$, where
$m$ is the Weierstrass degree (necessarily nonzero in this case)
on the appropriate annulus from
either \eqref{eq:leftslope} or~\eqref{eq:rightslope},
respectively, and $|\cdot|_{\infty}$ denotes the
(usual) archimedean absolute value.
Readers familiar with $\PBerk$ will recognize that
the integer $m=\deg_{\zeta,\vec{v}}(\phi)$ is denoted
$m_{\phi}(\zeta,\vec{v})$ in \cite[Section~9.1]{bakrum};
previously, in \cite[Lemme~2.1]{jrl}, with $\vec{v}$ denoted
$\mathcal{P}$ and referred to as a ``bout'', or end,
it had been denoted $\deg_{\phi}(\mathcal{P})$.
It is less than or equal to the multiplicity or local degree
of $\phi$ at $\zeta$,
denoted by $m_{\phi}(\zeta)$ in \cite{bakrum},
and by $\deg_{\phi}(\zeta)$ in \cite{jrl,FRL}.

The multiplicities
$\deg_{\zeta,\vec{v}}(\phi)$ satisfy the following useful properties.

\begin{lemma} \label{lem:mult}
Let $\phi,\psi\in K(z)$ be rational functions,
let $\zeta\in\HBerk$, and let $\vec{v}$ be a direction at $\zeta$.  Then
$$\deg_{\zeta,\vec{v}}(\psi\circ\phi)
= \deg_{\phi(\zeta),\phi_*(\vec{v})} (\psi) \cdot
\deg_{\zeta,\vec{v}}(\phi).$$
\end{lemma}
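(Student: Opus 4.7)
The plan is to unfold the definition of $\deg_{\zeta,\vec{v}}(\cdot)$ as the multiplicity of the map on a sufficiently thin annulus representing the direction $\vec{v}$, and then verify the product formula by direct composition. Write $\zeta = \zeta(a,r)$, and, for a small parameter $\delta > 0$, let $X_\delta$ be the annulus representing $\vec{v}$: namely $D(a, r+\delta) \setminus \Dbar(a, r)$ if $\vec{v}$ is the direction of $\infty$, or $D(b, r) \setminus \Dbar(b, r - \delta)$ if $\vec{v}$ is the direction containing some $b \in \Dbar(a, r)$. By the definitions of $\phi_*(\vec{v})$ and $\deg_{\zeta,\vec{v}}(\phi)$ recalled just before the lemma, for all sufficiently small $\delta > 0$, $\phi$ maps $X_\delta$ everywhere $m_\phi$-to-one onto an annulus $Y_\delta$ at $\phi(\zeta)$ contained in the direction $\phi_*(\vec{v})$, where $m_\phi := \deg_{\zeta,\vec{v}}(\phi)$.

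The key additional point is that, by shrinking $\delta$ still further, we may arrange that $Y_\delta$ is itself a sufficiently thin annulus at $\phi(\zeta)$ in the direction $\phi_*(\vec{v})$ for the same definition to apply to $\psi$ there. Concretely, the restriction of $\phi$ to the line segment emanating from $\zeta$ in the direction $\vec{v}$ is continuous and sends $\zeta$ to $\phi(\zeta)$, so as $\delta \to 0$ the annulus $Y_\delta = \phi(X_\delta)$ shrinks toward $\phi(\zeta)$ along the corresponding segment in the direction $\phi_*(\vec{v})$. Once $\delta$ is small enough that the definition applies simultaneously to $\phi$ on $X_\delta$ and to $\psi$ on $Y_\delta$, we obtain that $\psi$ maps $Y_\delta$ everywhere $m_\psi$-to-one onto an annulus $Z_\delta$ at $\psi(\phi(\zeta))$ contained in the direction $\psi_*(\phi_*(\vec{v}))$, where $m_\psi := \deg_{\phi(\zeta),\phi_*(\vec{v})}(\psi)$.

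Composing, $\psi \circ \phi$ maps $X_\delta$ onto $Z_\delta$ as the composition of an $m_\phi$-to-one surjection with an $m_\psi$-to-one surjection, hence everywhere $m_\phi m_\psi$-to-one, and $Z_\delta$ lies in the direction $\psi_*(\phi_*(\vec{v}))$ at $(\psi\circ\phi)(\zeta)$. Applying the definition of $\deg_{\zeta,\vec{v}}(\psi\circ\phi)$ (and, as a bonus, of $(\psi\circ\phi)_*(\vec{v})$) immediately yields the asserted equality together with the identity $(\psi\circ\phi)_*(\vec{v}) = \psi_*(\phi_*(\vec{v}))$.

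The only nontrivial step is the simultaneous smallness of $\delta$ described in the second paragraph, which is a routine continuity argument on the line segment from $\zeta$ in direction $\vec{v}$ rather than a deep point. One could alternatively translate the whole argument into the language of Weierstrass degrees via the slopes \eqref{eq:leftslope} and \eqref{eq:rightslope} of the Newton copolygon, using multiplicativity of Weierstrass degree under composition of annuli, but the annulus-based derivation above is more transparent and fits most naturally with the definitions just introduced.
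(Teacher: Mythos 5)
Your proposal is correct and follows essentially the same route as the paper's proof, which also takes a sufficiently small annulus $X$ abutting $\zeta$, sets $Y=\phi(X)$, and multiplies the degrees of $\phi:X\to Y$ and $\psi:Y\to\psi(Y)$. The only difference is that you spell out the (routine) continuity argument ensuring $Y$ can be made small enough for the definition to apply to $\psi$, which the paper leaves implicit.
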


\begin{proof}
Although this statement is true for points of all types, we will only
use or prove it for types~II and~III.
Let $X$ be a sufficiently small annulus abutting $\zeta$,
and let $Y=\phi(X)$.  Then because $\phi:X\to Y$ has degree
$\deg_{\zeta,\vec{v}}(\phi)$, and
$\psi:Y\to\psi(Y)$ has degree
$\deg_{\phi(\zeta),\phi_*(\vec{v})}(\psi)$, the desired equality
is immediate.
\end{proof}

\begin{lemma} \label{lem:multcompute}
Let $\phi\in K(z)$, let $\zeta\in\HBerk$, and let $a\in K$.
Let $\vec{v}$ be the direction at $\zeta$ containing $a$,
let $\vec{w}$ be the direction at $\zeta$ containing $\infty$,
and assume that $\vec{v}\neq\vec{w}$.
\begin{list}{\rm \alph{bean}.}{\usecounter{bean}}
\item
If $\phi_*(\vec{v})$ is the direction at $\phi(\zeta)$
containing $0$, then
$$\deg_{\zeta,\vec{v}}\phi = N_a^-(\phi,\zeta,0)-N_a^-(\phi,\zeta,\infty)\geq 1.$$
\item
If $\phi_*(\vec{w})$ is the direction at $\phi(\zeta)$
containing $0$, then
$$\deg_{\zeta,\vec{w}}\phi = -N_a^+(\phi,\zeta,0)+N_a^+(\phi,\zeta,\infty)\geq 1.$$
\end{list}
\end{lemma}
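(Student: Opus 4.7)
The plan is to read both identities directly off the Newton copolygon machinery reviewed in Section~\ref{sec:nonarch}, combined with the identification, recalled just before the statement, of $\deg_{\zeta,\vec v}(\phi)$ with the absolute value of the Weierstrass degree on a small annulus abutting $\zeta$ in direction $\vec v$. First, because $\vec v\neq\vec w$, the point $a$ lies strictly below $\zeta$ in the Berkovich tree, so I can take $a$ itself as the center and write $\zeta=\zeta(a,r)$ for some $r>0$. The Newton copolygon of $\phi$ centered at $a$ then passes through $\zeta$ at parameter $\log r$; its left slope just below $\log r$ is given by \eqref{eq:leftslope}, and its right slope just above is given by \eqref{eq:rightslope}. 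These slopes coincide with the Weierstrass degrees of $\phi$ on $X^-=D(a,r)\setminus\Dbar(a,r-\epsilon)$ and $X^+=D(a,r+\epsilon)\setminus\Dbar(a,r)$, respectively, and $X^-$ abuts $\zeta$ in direction $\vec v$ while $X^+$ abuts $\zeta$ in direction $\vec w$.

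For part (a), I would set $m:=N_a^-(\phi,\zeta,0)-N_a^-(\phi,\zeta,\infty)$; by the discussion immediately preceding the lemma, $\deg_{\zeta,\vec v}(\phi)=|m|_\infty$, so it only remains to identify the sign of $m$. Moving a short distance from $\zeta$ into $\vec v$ decreases $\log t$, while the hypothesis that $\phi_*(\vec v)$ is the direction of $0$ forces $\|\phi\|_{(\cdot)}$ to decrease along that move. Hence $m$, being the ratio of two simultaneous decreases, is positive, and since $m\in\ZZ$ it satisfies $m\geq 1$.

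For part (b), the recipe is entirely parallel with $X^+$ replacing $X^-$ and the right slope $m':=N_a^+(\phi,\zeta,0)-N_a^+(\phi,\zeta,\infty)$ replacing $m$. Now moving into $\vec w$ \emph{increases} $\log t$, but $\|\phi\|_{(\cdot)}$ still decreases, because $\phi_*(\vec w)$ is again the direction of $0$. Therefore $m'$ is a negative integer, and $\deg_{\zeta,\vec w}(\phi)=|m'|_\infty=-m'=-N_a^+(\phi,\zeta,0)+N_a^+(\phi,\zeta,\infty)\geq 1$, giving the second identity.

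The only real obstacle I foresee is sign bookkeeping: one has to keep straight which of $X^\pm$ lies on which side of $\log r$ on the Newton copolygon, and use the hypothesis ``direction of $0$'' (as opposed to ``direction of $\infty$'') to pin down the sign of the Weierstrass degree in each case. Once the sign conventions are fixed, both halves of the lemma drop out immediately by unpacking the definitions of $N_a^\pm(\phi,\zeta,b)$ and of $\deg_{\zeta,\vec v}(\phi)$.
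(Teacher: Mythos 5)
Your proposal is correct and follows exactly the route the paper takes: the paper's proof is a one-line appeal to the identification of $\deg_{\zeta,\vec{v}}(\phi)$ with the archimedean absolute value of the Weierstrass degree on the annulus abutting $\zeta$ in the given direction, i.e.\ with the slopes \eqref{eq:leftslope} and \eqref{eq:rightslope}. Your additional sign bookkeeping (the left slope is positive and the right slope is negative because, in either case, $\|\phi\|$ decreases as one moves off $\zeta$ into a direction mapping to the direction of $0$) is exactly the content implicitly invoked there, and it is carried out correctly.
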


\begin{proof}
This is simply the alternate characterization of the tangent
direction multiplicities in terms of
the Weierstrass degrees~\eqref{eq:leftslope}
and~\eqref{eq:rightslope}.
\end{proof}

We close this section by describing another function on $\HBerk$,
similar to one introduced in \cite[Section~6]{benedetto}.
Fix a nonzero rational function $\phi\in K(z)\setminus\{0\}$.
Define the \emph{distortion} of $\phi$ to be the real-valued function
$\delta(\phi,\cdot)$ on $\HBerk$ given by
\begin{equation}
\label{eq:hdef}
\delta(\phi,\zeta) = \log\rad(\zeta) + \log\|\phi'\|_{\zeta}
-\log\|\phi\|_{\zeta}.
\end{equation}

\begin{lemma} \label{lem:hbound}
Let $\phi\in K(z)\setminus\{0\}$,
let $a\in K$, let $r>0$, and set $\zeta=\zeta(a,r)$.
Then
\begin{equation}
\label{eq:hbound}
\log |N_a^{\pm}(\phi,\zeta,0)-N_a^{\pm}(\phi,\zeta,\infty)|
\leq \delta(\phi,\zeta) \leq 0,
\end{equation}
where $|\cdot|$ denotes, as always, the absolute value on $K$.
\end{lemma}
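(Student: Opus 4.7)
The plan is to reduce both inequalities to direct computations on a Laurent expansion of $\phi$ around $a$. If $\phi$ has a pole on the circle $|z-a|=r$, then $\|\phi\|_\zeta = \infty$ and the inequality becomes vacuous, so I may assume no such pole. Then $\phi$ admits a Laurent expansion $\phi(z) = \sum_{i\in\ZZ} c_i(z-a)^i$ convergent on a two-sided open annulus $D(a,r+\epsilon) \setminus \Dbar(a,r-\epsilon)$. The standard non-archimedean calculation of the seminorm $\|\cdot\|_\zeta$ on such series yields
\[
\|\phi\|_\zeta \;=\; \max_{i\in\ZZ}|c_i|r^i
\qquad\text{and}\qquad
\|\phi'\|_\zeta \;=\; \max_{i\in\ZZ}|i|\,|c_i|r^{i-1},
\]
with both maxima attained since $|c_i|r^i\to 0$ as $i\to\pm\infty$.

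The upper bound $\delta(\phi,\zeta)\leq 0$ is then immediate from the crucial non-archimedean inequality $|i|\leq 1$ for every $i\in\ZZ$:
\[
r\|\phi'\|_\zeta \;=\; \max_i |i|\,|c_i|r^i \;\leq\; \max_i |c_i|r^i \;=\; \|\phi\|_\zeta.
\]

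For the lower bound I would invoke the identifications recorded in equations \eqref{eq:leftslope}--\eqref{eq:rightslope}: the integer $s^\pm := N_a^\pm(\phi,\zeta,0) - N_a^\pm(\phi,\zeta,\infty)$ is the Weierstrass degree of the Laurent expansion on the inner (respectively outer) open annulus abutting $\zeta$. By the definition of the Weierstrass degree together with the continuity of $t\mapsto |c_i|t^i$, this integer $s^\pm$ indexes a term that realizes the maximum defining $\|\phi\|_\zeta$, so that $|c_{s^\pm}|r^{s^\pm} = \|\phi\|_\zeta$. Substituting the single index $i = s^\pm$ into the formula for $\|\phi'\|_\zeta$ then gives
\[
r\|\phi'\|_\zeta \;\geq\; |s^\pm|\,|c_{s^\pm}|r^{s^\pm} \;=\; |s^\pm|\,\|\phi\|_\zeta,
\]
which is precisely $\log|s^\pm| \leq \delta(\phi,\zeta)$ after taking logarithms. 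No substantive difficulty is anticipated: the one conceptual input, namely that $s^\pm$ indexes a dominant term of the expansion, is already encoded in the Newton copolygon discussion preceding the lemma, and the remaining manipulations are one-line consequences of $|i|\leq 1$.
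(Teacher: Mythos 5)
Your overall strategy is the same as the paper's: expand $\phi$ in a Laurent series around $a$, compute $\|\phi\|_\zeta$ and $\|\phi'\|_\zeta$ termwise, get the upper bound from $|i|\leq 1$, and get the lower bound by observing that the Weierstrass degree $s^{\pm}$ indexes a term realizing the maximum at radius $r$. The termwise computations and the identification of $s^{\pm}$ as a dominant index are correct as far as they go.

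There is, however, a genuine gap in your opening reduction. You dispose of the case where $\phi$ has a pole on the circle $|z-a|=r$ by claiming that then $\|\phi\|_\zeta=\infty$ and the inequality is vacuous. This is false: for $\zeta\in\HBerk$ the seminorm is a multiplicative \emph{norm} on $K(z)$, defined by $\|f/g\|_\zeta=\|f\|_\zeta/\|g\|_\zeta$, and it takes finite nonzero values on every nonzero rational function regardless of where the poles sit. (Indeed, if $\|\phi\|_\zeta$ could be $\infty$, then $\delta(\phi,\zeta)=-\infty$ and the asserted lower bound $\log|s^{\pm}|\leq\delta(\phi,\zeta)$ would actually \emph{fail} whenever $s^{\pm}\neq 0$, so the case is not vacuous even on your own terms.) What the presence of a pole at distance exactly $r$ from $a$ really obstructs is the existence of a convergent Laurent expansion on a two-sided annulus $D(a,r+\epsilon)\setminus\Dbar(a,r-\epsilon)$, which your whole computation relies on. The paper's proof circumvents exactly this: it expands $\phi$ only on a one-sided annulus $\{r-\epsilon<|x-a|<r\}$ (respectively $\{r<|x-a|<r+\epsilon\}$), where for small $\epsilon$ there are no zeros or poles at all since $\phi$ has only finitely many, proves the two inequalities at $\zeta(a,t)$ for $t$ in that range, and then passes to the limit $t\to r$ using the continuity of $\rad$, $\|\phi\|$, and $\|\phi'\|$ along the segment $L_a$. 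Your argument becomes correct once you replace the false reduction by this one-sided-annulus-plus-limit device (or, equivalently, by first multiplying away the finitely many poles on the circle and tracking the effect on both sides of the inequality).
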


\begin{proof}
We will prove the upper bound for $N_a^-$; the proof for $N_a^+$ is similar.
After a change of coordinates on the domain, we may assume that
$a=0$
and expand $\phi(z)$ as a Laurent series $\sum_{i\in\ZZ} c_i z^i$
on a sufficiently small annulus
$X=\{x\in K : r-\epsilon<|x|<r\}$.
Setting
$m = N_a^-(\phi,\zeta(0,r),0)-N_a^-(\phi,\zeta(0,r),\infty)$,
we have
$|\phi(x)| = |c_m x^m| > |c_i x^i|$ for all
$x\in X$
and $i\in\ZZ\setminus\{m\}$.
Thus, for all $t\in (r-\epsilon,r)$, we have
$$ t \|\phi'\|_{\zeta(0,t)}
= t \sup_{i\in\ZZ} |ic_i| t^{i-1} \geq |mc_m| t^m
= |m| \cdot \|\phi\|_{\zeta(0,t)},$$
and hence $\delta(\phi,\zeta(0,t))\geq \log|m|$.
In addition,
$$ t \|\phi'\|_{\zeta(0,t)}
= t \sup_{i\in\ZZ} |ic_i| t^{i-1}
\leq t \sup_{i\in\ZZ} |c_i| t^{i-1}
= \sup_{i\in\ZZ} |c_i| t^i = \|\phi\|_{\zeta(0,t)},$$
and hence $\delta(\phi,\zeta(0,t))\leq 0$.
(See also \cite[inequality~(6.5)]{benedetto}.)
The desired bounds are now immediate by
taking limits as $t\nearrow r$,
by the continuity of $\rad$,
$\zeta\mapsto\|\phi\|_{\zeta}$,
and $\zeta\mapsto\|\phi'\|_{\zeta}$
on the line segment $L_0\subseteq\PBerk$.
\end{proof}

\begin{remark}\label{rem:deltabound}Combining the previous two lemmas, note that under the hypotheses of the first part of Lemma~\ref{lem:multcompute}, the distortion $\delta(\phi,\zeta)$ is bounded below by $\log|\deg_{\zeta,\vec{v}}(\phi)|$, which is finite if $\charact K = 0$ or $\charact K > \deg \phi$. Specifically, under the same hypotheses, $\delta(\phi,\zeta)$ is bounded below by $\min\{\log|m| : 1\leq m \leq \deg\phi\}$.  This fact will be essential in our proof of Theorem~\ref{th:main}.\end{remark}

\section{Proof of Theorems~\ref{th:main} and~\ref{th:nondyn}}\label{proof}

In this section, we establish Theorem~\ref{th:main}, using an
argument on the Berkovich analytic space $\PBerk$ associated to $\PP^1_K$.
The outline of the proof is as follows.  First, there is a maximal open
disk $U$ containing the attracting fixed point $\gamma$,
with the property that $\phi$ contracts all distances in $U$ by a factor
of exactly $|\phi'(\gamma)|$.  In particular, all points in $U\setminus\{\gamma\}$ are strictly
attracted to $\gamma$, and hence it suffices to show that
$U\setminus \{\gamma\}$ contains a critical value.
We define a certain function $G:\PBerk\to [-\infty,\infty]$ that
involves the Newton copolygons of $\phi$ and $\phi'$; the slopes of
$G$ count the numbers of zeros, poles, and critical points of $\phi$
inside various disks.  By controlling the growth of $G$ and then using
these slopes to count carefully, we will be able to show that there
are more critical points in $\phi^{-1}(U)$ than can map to $\gamma$.

More precisely, Theorem~\ref{th:main} will be a consequence of the following.

\begin{theorem}\label{expbound} Let $K$ be a field satisfying the hypotheses of Theorem \ref{th:main}, and let $\phi\in K(z)$ be a rational function of degree $d\geq 2$.  Suppose that $\phi$ has a fixed point $\gamma$ with multiplier $\lambda \neq 0$ satisfying
\begin{equation}\label{ass}
0<|\lambda| < |\deg_{\zeta,\vec{v}}\phi|^d
\end{equation}
for all $\zeta\in\PBerk$ and all directions $\vec{v}$ at $\zeta$. Then there is a disk $U\subseteq\PK$ containing $\gamma$ such that $\phi$ maps $U$ into itself injectively, all points of $U$ are attracted to $\gamma$ under iteration, and $U$ contains a critical value that is strictly attracted to $\gamma$.
\end{theorem}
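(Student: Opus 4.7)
After translation, I may assume $\gamma=0$, so that $\phi(z)=\lambda z+O(z^2)$ at $0$. My plan is to construct $U$ first, via the Newton copolygon of $\phi$ on the line segment $L_0\subseteq\PBerk$ from $0$ to $\infty$, and then to use slope accounting on $L_0$ to force a critical value into $U\setminus\{0\}$.

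\textbf{Construction of $U$.} The piecewise-linear function $t\mapsto\log\|\phi\|_{\zeta(0,t)}$ has slope $+1$ immediately to the right of $t=0$, coming from the simple zero of $\phi$ at $0$. Let $r^{*}>0$ be the largest $r$ for which this slope remains $+1$ on all of $(0,r]$, and set $U=D(0,r^{*})$ and $\zeta^{*}=\zeta(0,r^{*})$. For $z\in U$, the term $\lambda z$ of the Laurent expansion of $\phi$ at $0$ dominates, so $|\phi(z)|=|\lambda||z|<|z|$. The same domination argument applied to $\phi(z_1)-\phi(z_2)$ gives injectivity of $\phi|_U$, and the uniform contraction rate $|\lambda|<1$ forces every orbit in $U$ to converge to $\gamma=0$.

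\textbf{Finding a critical value.} The heart of the proof is to locate a critical point of $\phi$ in $\phi^{-1}(U)$ whose image is not $\gamma$. By construction, the Newton copolygon of $\phi$ breaks at $\zeta^{*}$; its slope $s$ immediately to the right of $\log r^{*}$ satisfies $s\neq 1$. Combining (\ref{eq:rightslope}) with Lemmas~\ref{lem:multcompute} and~\ref{lem:hbound} and the hypothesis (\ref{ass}), the distortion $\delta(\phi,\zeta^{*})$ from (\ref{eq:hdef}) is bounded below by $\min\{\log|k|:1\leq k\leq d\}$, which controls $\|\phi'\|_{\zeta^{*}}$ from below in terms of $|\lambda|$ and an integer $k=\deg_{\zeta^{*},\vec{w}}\phi\leq d$, where $\vec{w}$ is the direction at $\zeta^{*}$ toward $\infty$. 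Hypothesis (\ref{ass}) then says $|k|^{d}>|\lambda|$. I would propagate this bound by tracking the Newton copolygons of $\phi$ and $\phi'$ along $L_0$ outward from $\zeta^{*}$: the slopes of $\log\|\phi'\|$ count critical points of $\phi$ in growing closed disks about $0$, and those of $\log\|\phi\|$ count preimages of $0$ and poles. The strict inequality in (\ref{ass}), applied cumulatively, will produce strictly more critical points inside $\phi^{-1}(U)$ than the at most $d-1$ preimages of $\gamma$ that lie in $\phi^{-1}(U)\setminus\{\gamma\}$, and pigeonhole then yields a critical point whose image is in $U\setminus\{0\}$---a critical value strictly attracted to $\gamma$.

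\textbf{Main obstacle.} The hardest step will be this global slope accounting: turning a pointwise distortion bound at $\zeta^{*}$ into a global count of critical points in $\phi^{-1}(U)$ that strictly exceeds the number of non-$\gamma$ preimages of $\gamma$ there. This requires propagating slope contributions along $L_0$ through potentially several break points past $\zeta^{*}$, matching each contribution to its geometric meaning (critical point, pole, or preimage of $\gamma$), and invoking (\ref{ass}) with exactly the power $d$ so that the resulting inequality is strict rather than weak.
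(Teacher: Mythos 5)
Your setup (normalizing $\gamma=0$, taking $U$ to be the largest disk on which the linear term dominates, and aiming to outnumber the preimages of $\gamma$ by critical points via Newton copolygon slopes and the distortion bound) is the right family of ideas, and it matches the paper's strategy in outline. But the proposal has a genuine gap: the entire "global slope accounting" step, which you correctly identify as the heart of the matter, is left as a plan rather than carried out, and the plan as stated would not succeed for rational functions. The difficulty is that the connected component $V$ of $\phi^{-1}(U)$ containing $U$ is in general not a disk but a connected affinoid $D(0,R)\setminus(W_1\cup\cdots\cup W_n)$ with holes. Tracking the copolygons of $\phi$ and $\phi'$ only along the single segment $L_0$ from $0$ to $\infty$ counts zeros, poles, and critical points in disks $\Dbar(0,r)$ centered at $0$; it cannot isolate the region $V$, and in particular it cannot exclude the poles and spurious preimages hiding inside the holes $W_i$, nor detect the critical points that sit near their boundaries. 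The paper's proof must run the same break-point argument separately along the segments from $\zeta(0,S)$ toward each hole, extract an inequality at each resulting break point $\zeta(b_i,s_i)$, and then sum these with the inequality at $\zeta(0,S)$ to count inside the affinoid $W=\Dbar(0,S)\setminus\bigl[D(b_1,s_1)\cup\cdots\cup D(b_k,s_k)\bigr]$. (This is exactly why the polynomial case, where $V$ is a disk, admits the sharper constant $\epsilon^{\mathrm{poly}}_{p,d}$; see Theorem~\ref{th:polys} and Remark~\ref{rem:polymore}.)

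A second missing ingredient is the precise auxiliary function that makes the exponent $d$ in \eqref{ass} do its work. The paper uses $G(\zeta)=m\,\delta(\phi,\zeta)+\log\|\phi\|_{\zeta}$, where $m\leq d$ is the degree of $\phi\colon V\to U$. One computes $G(\zeta(0,1))=\log|\lambda|$, while at each boundary point of $V$ Lemmas~\ref{lem:multcompute} and~\ref{lem:hbound} give $G\geq m\log|\deg_{\zeta,\vec v}\phi|\geq \log|\deg_{\zeta,\vec v}\phi|^d>\log|\lambda|$; the weight $m$ is what converts \eqref{ass} into the strict increase of $G$ that forces the break points. Your distortion bound at the single point $\zeta^*$ is not enough: you need the inequality at every boundary point of $V$, the correct combination of the slopes of $\log\|\phi'\|$ and $\log\|\phi\|$ (weighted by $m$ and $1-m$), and the tame-ramification identity $N(\phi',W,0)=M+N(\phi,W,0)-t$ together with $N(\phi,W,0)\leq m$ to conclude $M>0$. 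Until that bookkeeping is done, the pigeonhole conclusion is asserted rather than proved.
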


\begin{proof} We will in fact only consider $\deg_{\zeta,\vec{v}}\phi$ at a few points $\zeta$ in the immediate attracting basin of $\gamma$.

After a change of variables, we may make the following three assumptions: that $\gamma=0$, that $\phi(\infty)=\infty$, and that the minimum absolute value of a non-zero root or pole of $\phi$ is 1. The second assumption is legitimate because $\phi$ has at least one other fixed point, and the third assumption is legitimate because we may then conjugate by a map of the form $z\mapsto cz$. Define $U=D(0,1)$. Note that because $\phi$ has no poles in $U$ and only the simple zero at $z=0$, we have $|\phi(z)|=|\lambda z|$ for all $z\in U$. Thus $\phi(U)=D(0,|\lambda|)$, which is a proper subset of $U$.

Let $V$ be the connected component of $\phi^{-1}(U)$ containing $U$, so that
$U$ is a proper subset of $V$, and $\phi$ is an $m$-to-$1$ map of $V$ onto $U$, for some integer $1\leq m\leq d$.  We may write $V=D(0,R)\setminus (W_1\cup \cdots\cup W_n)$, where
$R>1$ and each $W_i = \Dbar(b_i,s_i)$ is a rational closed disk contained in $D(0,R)$ and not intersecting $D(0,1)$. We have $\phi(\zeta(0,R))=\zeta(0,1)$ and
$\phi(\zeta(b_i,r_i)) = \zeta(0,1)$ for each $i=1,\ldots, n$.

Define $G:\HBerk\to \RR$ by
$$G(\zeta) = m \delta(\phi,\zeta) + \log\|\phi\|_{\zeta} =
  m\log\rad(\zeta) + m\log\|\phi'\|_{\zeta} + (1-m)\log\|\phi\|_{\zeta}.$$
Note that $G$ is continuous along any line segment in $\HBerk$.

First, since $\|\varphi\|_{\zeta(0, 1)} = |\lambda|$ and $N_0^-(\phi, \zeta(0, 1), 0) - N_0^-(\phi, \zeta(0, 1), \infty) = 1$, Lemmas~\ref{lem:multcompute} and~\ref{lem:hbound} imply that \[G(\zeta(0,1)) = m\cdot 0 + \log |\lambda| = \log|\lambda|.\] On the other hand, Lemma~\ref{lem:multcompute} also implies that 
$$N_0^-(\phi,\zeta(0,R),0) - N_0^-(\phi,\zeta(0,R),\infty) = \deg_{\zeta(0,R),\vec{v}}\phi$$
 where $\vec{v}$ is the tangent direction at $0$ and
$$N_{b_i}^+(\phi,\zeta(b_i,r_i),\infty) - N_{b_i}^+(\phi,\zeta(b_i,r_i),0) = \deg_{\zeta(b_i,r_i), \vec{w}}\phi$$
 for each $1 \leq i \leq n$ where $\vec{w}$ is the tangent direction at $\infty$. Then Lemmas~\ref{lem:multcompute} and~\ref{lem:hbound} and the assumption~\eqref{ass} combine to show that
\[G(\zeta(0,R)) = m\delta(\phi,\zeta(0,R)) \geq m\log|\deg_{\zeta(0,R),\vec{v}}\phi| > G(\zeta(0,1))\]
and
\[G(\zeta(b_i,r_i)) = m\delta(\phi,\zeta(b_i,r_i)) \geq m\log|\deg_{\zeta(b_i,r_i),\vec{w}}\phi| > G(\zeta(0,1)).\]

Consider the unique interval in $\HBerk$ from $\zeta(0,1)$ to
$\zeta(0,R)$, which we identify with the real interval $[0, \log R]$.
Since the function $\log r \mapsto G(\zeta(0,r))$ on $[0,\log R]$
is piecewise linear and continuous, and since
$G(\zeta(0,R))>G(\zeta(0,1))$, there must be some subinterval along
which $G$ is increasing.
On the other hand, the slope of this function
is precisely
\begin{equation}
\label{eq:localslope}
m [1+ N_0^+(\phi',\zeta,0) - N_0^+(\phi',\zeta,\infty)]
+ (1-m) [N_0^+(\phi,\zeta,0) - N_0^+(\phi,\zeta,\infty)],
\end{equation}
where $\zeta=\zeta(0,r)$, at each point $\log r$
at which the function is smooth.
Thus, the integer \eqref{eq:localslope} must be positive, and hence
at least $1$, for some $r\in [1, R)$;
let $S$ denote the infimum of all such $r$.
By right-continuity of $N_0^+$, it must be the case that
\begin{multline}
\label{eq:Sbound}
m [1 + N_0^+(\phi',\zeta(0,S),0) - N_0^+(\phi',\zeta(0,S),\infty)]
\\
+ (1-m) [N_0^+(\phi,\zeta(0,S),0) - N_0^+(\phi,\zeta(0,S),\infty)] \geq 1.
\end{multline}

If $G(\zeta(0, S))>G(\zeta(0, 1))$, then again there must be a
subinterval of $[1, S]$ along which $G$ is increasing, which
contradicts the definition of $S$.  Thus, we must have
\[G(\zeta(0,S))\leq G(\zeta(0,1))<G(\zeta(b_i,r_i))\]
for all $1\leq i\leq n$. Once
again, then, $G$ must increase along some subinterval of the interval
in $\PBerk$ running from $\zeta(0,S)$ to $\zeta(b_i,s_i)$. We discard
each index $i$ for which $b_i\not\in\Dbar(0,S)$ and suppose, without
loss of generality, that the remaining indices are $i=1,\ldots, k$.

For each $i$, we again identify the interval in $\HBerk$
from $\zeta(0,S)=\zeta(b_i,S)$ to $\zeta(b_i,r_i)$
with the real interval $[\log r_i, \log S]$; indeed, the former consists
of all Berkovich points of the form $\zeta(b_i,r)$ with $r_i\leq r\leq S$.
As before, because the continuous,
piecewise linear function $\log r \mapsto \log G(\zeta(b_i,r))$
is greater at $\log r_i$ than at $\log S$, there must be points $\log r$
at which the slope of this function is negative,
and hence at most $-1$.
Let $s_i$ be the supremum of all such $r$.
By the left-continuity of $N_{b_i}^-$, we deduce that
\begin{multline}
\label{eq:sibound}
m [1 + N_{b_i}^-(\phi',\zeta(b_i,s_i),0)
- N_{b_i}^-(\phi',\zeta(b_i,s_i),\infty)]
\\
+ (1-m) [N_{b_i}^-(\phi,\zeta(b_i,s_i),0)
- N_{b_i}^-(\phi,\zeta(b_i,s_i),\infty)] \leq -1.
\end{multline}
Let $W=\Dbar(0,S)\setminus[ D(b_1,s_1) \cup\cdots\cup D(b_k,s_k) ]$, and for any nonzero $h\in K(z)$ and $a\in\PP^1(K)$, let $N(h,W,a)$ be the number of
roots of $h(z)=a$ in $W$, counting multiplicity. Summing inequality~\eqref{eq:sibound} across $i=1,\ldots,k$ and subtracting from inequality~\eqref{eq:Sbound},
we have
$$m [(1-k) + N(\phi',W,0) - N(\phi',W,\infty)]
+ (1-m) [N(\phi,W,0) - N(\phi,W,\infty)] \geq 1+k.$$
However, $W\subseteq V$, and therefore $\phi(W)\subseteq
\phi(V)=U$. In particular, $\phi$ has no poles in $W$, and hence neither does $\phi'$.
Thus,
\begin{equation}
\label{eq:zerocrit}
m [(1-k) + N(\phi',W,0)] + (1-m) [N(\phi,W,0)] \geq 1+k.
\end{equation}

Let $M\geq 0$ denote the number of critical points in $W$ that are
\emph{not} zeros of $\phi$, counted with multiplicity.
Let $t$ denote the number of \emph{distinct} zeros of $\phi$ in $W$.
For each such zero $x\in W$ of $\phi$,
the order of vanishing of $\phi'$ at $x$ is one less than
the order of vanishing of $\phi$ at $x$,
since by hypothesis~\eqref{ass},
$\phi$ has no wildly ramified critical points.
Thus,
\begin{equation}
\label{eq:Mbound}
N(\phi',W,0) = M + N(\phi,W,0) - t,
\end{equation}
Incorporating equation~\eqref{eq:Mbound}
into \eqref{eq:zerocrit}, then,
\begin{equation}
\label{eq:Mbound2}
mM \geq 1 + k - m + mk + mt - N(\phi,W,0)
%=(1+k)(1+m) + (t-2)m - N(\phi,W,0)
\geq (1+k)(1+m) + (t-3)m,
\end{equation}
where the second inequality is because $W\subseteq V$,
and hence $N(\phi,W,0)\leq m$.

We claim that $M>0$.  Note that $t\geq 1$ because of the simple
zero of $\phi$ at $0\in W$.  Thus,
if $k\geq 1$, our claim is immediate from inequality~\eqref{eq:Mbound2}.
On the other hand, if $k=0$, then $W=\Dbar(0,S)\supseteq\Dbar(0,1)$.
By the choice of coordinate at the start of this proof, there is
a zero or pole $y$ of $\phi$ with $|y|=1$; in particular,
$y\in W\setminus\{0\}$.  As we noted earlier, however, $\phi$
has no poles in $W$, and hence $y$ must be a zero of $\phi$.
Thus, $t\geq 2$, and again the claim follows from
inequality~\eqref{eq:Mbound2}.

Since $M >0$, there is a critical point $\alpha\in W$ such that
$\phi(\alpha)\in U\setminus\{0\}$.  However, recall that
$\phi:U\to U$ has the property that $|\phi(z)|=|\lambda z|$ for all
$z\in U$.  Thus, $\phi^n(\alpha)\to 0$ as $n\to \infty$, but
$\phi^n(\alpha)\neq 0$ for all $n\geq 0$, as desired.
\end{proof}

%Now if $t = 1$, then since $0 \in W$ and $0$ is a simple zero of
%$\phi$, we have $N(\phi,W,0) = 1$. Note also that since $\phi$ is
%injective and has no poles on $W$, since $U$ is the largest disk on
%which $\phi$ has no zeros or poles other than at $z=0$, and since $W$
%properly contains $U$, $W$ cannot be a disk. In particular, we have
%$k \geq 1$. Then equation~\eqref{eq:zerocrit} reduces to
%$m[N(\phi',W,0)  - k] \geq k$, whence $N(\phi',W,0) > 0$
%and hence $M > 0$. Conversely, if $t \geq 2$, then
%\begin{equation}
%\label{eq:Mbound2}
%N(\phi',W,0) \leq M + N(\phi,W,0) - 2.
%\end{equation}
%Combining inequalities~\eqref{eq:zerocrit} and~\eqref{eq:Mbound2} then
%gives
%$$m(1-k + M -2) + N(\phi,W,0) \geq 1+k,$$
%and hence
%$$mM \geq (1+k)(1+m) - N(\phi,W,0)\geq (1+k)(1+m) - m >0.$$

\begin{proof}
[Proof of Theorem~\ref{th:main}]
As described in Section~\ref{sec:nonarch}, we have
$1\leq \deg_{\zeta,\vec{v}}\phi \leq d$ for all $\zeta\in\PBerk$
and all directions $\vec{v}$ at $\zeta$.
Since we had $\epsilon=\min\{|m|^d : 1\leq m \leq d\}$,
inequality~\eqref{ass} holds for all $\zeta\in\PBerk$.
Thus, the conclusion of Theorem~\ref{th:main} is immediate
from Theorem~\ref{expbound}.
\end{proof}

\begin{proof}[Proof of Theorem~\ref{th:nondyn}]
If $\varphi'(a) = 0$, then the result is trivial, as $a$ is a suitable
critical point.  We may therefore assume that $\varphi'(a) \neq 0$.
We may also choose $c\in K$ with $|c|<\epsilon / |\phi'(a)|$, and
with $|c|$ arbitrarily close to $\epsilon / |\phi'(a)|$.
Recalling that $\varphi(a)\neq\infty$ by hypothesis, define
$h(z) = c(z-\phi(a)) + a$, and
$\psi(z) = h(\phi(z))$.
Then $\psi(\infty)=\infty$, $\psi(a)=a$, and $\psi'(a) = c\varphi'(a)$
satisfies $0<|\psi'(a)|<\epsilon$.

Applying Theorem~\ref{expbound} to $\psi$ produces a disk $U\subseteq\PK$
containing $a$ and a critical point $\beta$ such that
\begin{enumerate}
\item $\psi$ maps $U$ into itself injectively,
\item $\psi^n(x)\to a$ for all $x\in U$, and
\item $\psi(\beta)\in U$.
\end{enumerate}
By property~(2), we have $\infty\not\in U$, and therefore property~(1)
implies that $U\setminus\{a\}$ does not intersect
$\psi^{-1}(\{a,\infty\})=\phi^{-1}(\{\phi(a),\infty\})$.
In other words, $U$ is contained in the largest disk
$V\subseteq K$ containing $a$ but no other points of
$\phi^{-1}(\{\phi(a),\infty\})$.
By hypothesis, however, $V=D(a,r)$.

Meanwhile, $\phi$ and $\psi$ have the same critical points; in particular,
$\beta$ is a critical point of $\phi$.
Noting that the inverse function of $h$ is
$h^{-1}(z) = c^{-1}(z-a) + \phi(a)$, we compute
$$\phi(\beta) = h^{-1}(\psi(\beta)) \in h^{-1}(U)
\subseteq h^{-1}(D(a,r)) = D(\phi(a),|c|^{-1}r).$$
Taking the intersection of the disks $D(\phi(a),|c|^{-1}r)$ across
all $c\in K$ with $|c|<\epsilon/|\phi'(a)|$, and bearing in mind
that $\phi$ has only finitely many critical points, it follows
that there is a critical point $\beta$ satisfying
$\phi(\beta)\in \Dbar(\phi(a), \epsilon^{-1} r |\phi'(a)|)$,
as desired.
\end{proof}

%%%%%%%%%%%%%%%%%%%%%%%%%%%%%%%%%%%%%
%%%%%%%%%%%%%%%%%%%%%%%%%%%%%%%%%%%%%
%%%%%%%%%%%%%%%%%%%%%%%%%%%%%%%%%%%%%
%%%%%%%%%%%%%%%%%%%%%%%%%%%%%%%%%%%%%
%%%%%%%%%%%%%%%%%%%%%%%%%%%%%%%%%%%%%

\section{Sharpness of the Bound in Theorem \ref{th:main}}
\label{sec:sharp}

As we commented in the introduction, the bound $\epsilon_{p,d}$
of Theorem~\ref{th:main} can be improved for polynomials.

\begin{theorem}
\label{th:polys}
Let $K$, $|\cdot|$, $p$, and $d$ be as in Theorem~\ref{th:main},
and define
\[\epsilon=\epsilon_{p, d}^{\mathrm{poly}}=\min\{|m|^m:1\leq m\leq d\}
\geq \epsilon_{p,d}.\]
Let $\varphi(z) \in K[z]$ be a polynomial of degree $d$,
and let $\gamma$ be a fixed point of $\varphi$ satisfying
$$0 < |\varphi'(\gamma)| < \epsilon.$$
Then there is a critical point of $\varphi$ which is strictly attracted to
$\gamma$.
\end{theorem}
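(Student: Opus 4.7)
The plan is to adapt the proof of Theorem~\ref{expbound} to the polynomial setting, exploiting the absence of finite poles. The key structural fact is that, for $\phi \in K[z]$, the connected component $V$ of $\phi^{-1}(U)$ containing the attracting disk $U$ has no Berkovich ``holes,'' which allows the exponent $d$ in the bound on $|\lambda|$ to be replaced by the local degree $m = \deg(\phi|_V) \leq d$.

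After the same normalizations as in the proof of Theorem~\ref{expbound}---placing $\gamma = 0$ (with $\phi(\infty) = \infty$ automatic for polynomials) and rescaling so that the minimum absolute value of a nonzero root of $\phi$ equals $1$---set $U = D(0,1)$ and write $V = D(0,R)\setminus(W_1\cup\cdots\cup W_n)$ as in that proof. The first step is to show $n = 0$. Suppose instead that some hole $W_i = \Dbar(b_i,s_i)$ exists. A sufficiently small annulus abutting $\zeta(b_i,s_i)$ from outside $W_i$ lies in $V$, and $\phi$ sends this annulus into $U$; hence $\phi_*$ maps the tangent direction $\vec{w}$ at $\zeta(b_i,s_i)$ containing $\infty$ to the tangent direction at $\zeta(0,1) = \phi(\zeta(b_i,s_i))$ containing $0$. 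Lemma~\ref{lem:multcompute}(b) then requires
\[ \deg_{\zeta(b_i,s_i),\vec{w}}\phi = N^+_{b_i}(\phi,\zeta(b_i,s_i),\infty) - N^+_{b_i}(\phi,\zeta(b_i,s_i),0) \geq 1, \]
but $\phi$ is a polynomial, so $N^+_{b_i}(\phi,\zeta(b_i,s_i),\infty) = 0$, forcing $N^+_{b_i}(\phi,\zeta(b_i,s_i),0) \leq -1$, which is absurd.

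Having shown $V = D(0,R)$ is a disk, put $m = \deg(\phi|_V) \in \{1,\ldots,d\}$. The $m$ preimages of $0 \in U$ under $\phi|_V$, counted with multiplicity, are exactly the zeros of $\phi$ in $D(0,R)$, so $N^-_0(\phi,\zeta(0,R),0) = m$, while $N^-_0(\phi,\zeta(0,R),\infty) = 0$ (no finite poles). Lemmas~\ref{lem:multcompute}(a) and~\ref{lem:hbound} then give $\delta(\phi,\zeta(0,R)) \geq \log|m|$, and defining $G$ as in the proof of Theorem~\ref{expbound} yields $G(\zeta(0,R)) \geq m\log|m|$ (using that $\|\phi\|_{\zeta(0,R)} = 1$). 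Since $G(\zeta(0,1)) = \log|\lambda|$, the hypothesis $|\lambda| < \epsilon^{\mathrm{poly}}_{p,d} \leq |m|^m$ gives $G(\zeta(0,R)) > G(\zeta(0,1))$.

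The remainder of the proof then runs exactly as in Theorem~\ref{expbound} but with $k = 0$ throughout: the counting inequality~\eqref{eq:Mbound2} collapses to $mM \geq 1 + (t-2)m$, and since $t \geq 2$ (the zero at $0$ together with at least one nonzero root of absolute value~$1$, which lies in $W$ because $W \supseteq \Dbar(0,1)$), we obtain $M \geq 1$, producing a critical point in $V$ strictly attracted to $\gamma$. The main obstacle is establishing the no-holes claim in the second paragraph; once it is in hand, the sharper exponent $|m|^m$ (replacing the $|m|^d$ of Theorem~\ref{expbound}) appears automatically because only the outer-boundary analysis at $\zeta(0,R)$ is needed, and taking the minimum over $m \in \{1,\ldots,d\}$ gives the constant $\epsilon^{\mathrm{poly}}_{p,d}$.
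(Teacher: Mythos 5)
Your proof is correct and follows essentially the same route as the paper's: both run the argument of Theorem~\ref{expbound} with $V$ a disk (so $k=0$), obtaining the improved lower bound $G(\zeta(0,R))\geq m\log|m|$ from the outer boundary alone. The only difference is that you prove the no-holes claim directly via Lemma~\ref{lem:multcompute}(b), whereas the paper simply invokes the standard fact, recalled in Section~\ref{sec:nonarch}, that polynomial preimages of finite disks are disks.
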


\begin{proof}We essentially follow the proof of Theorem~\ref{expbound}. In particular, we may assume that $\gamma=0$, and that $U=D(0,1)$ is the largest disk containing $0$ on which $\phi$ is injective.  Setting $V$ to be the connected component of $\phi^{-1}(U)$ containing $0$, we conclude that the mapping $\phi:V\to U$ is everywhere $m$-to-$1$, for some integer $1 < m \leq d$ (we cannot have $m = 1$ because then $\phi$ would be injective on $V \supsetneq U$). This time, however, we know that $V$ is a disk $D(0,R)$, since $\phi$ is a polynomial, and that $\phi$ has exactly $m$ zeros and no poles in $D(0,R)$, counting multiplicity.

Defining $G:\HBerk\to\RR$ exactly as in the proof of Theorem~\ref{expbound}, then, we have $$G(\zeta(0,R)) \geq m\log |N_0^-(\phi,\zeta(0,R),0) - N_0^-(\phi,\zeta(0,R),\infty)| =m\log m > |\phi'(0)| = G(\zeta(0,1)).$$ Thus, there is some radius $S\in [1,R)$ at which $G(\zeta(0,r))$ begins to increase.  We again conclude that inequality~\eqref{eq:Sbound} holds.  Because $\phi$ has no poles in $W=\Dbar(0,S)$, it follows that $$m[1 + N(\phi',W,0)] + (1-m)N(\phi,W,0)\geq 1.$$ Again setting $M\geq 0$ to be the number of critical points in $W$ that are not zeros of $\phi$, we use the fact that $N(\phi,W,0) > 1$ to obtain inequality~\eqref{eq:Mbound2}, which then implies $$m(M-1) + N(\phi,W,0)\geq 1.$$ On the other hand, since $W\subseteq V$, we have $N(\phi,W,0)\leq m$, and therefore $M \geq 1/m > 0$.  That is, there is a critical point in $W$ that is not a zero of $\phi$, and the conclusion follows.\end{proof}

\begin{remark}
\label{rem:polymore}
In the proof of Theorem~\ref{th:polys},
the only use of the hypothesis that $\phi$ is a polynomial
is to conclude that the connected affinoid $V$ is in fact a disk
$D(0,R)$, which in turn obviously implies that there are no
poles inside $D(0,R)$.
Thus, the sharper bound of Theorem~\ref{th:polys} actually holds
any time the region $V$ in the proof of Theorem~\ref{expbound}
is a disk, even when $\phi$ is a rational function.
\end{remark}

The bound in Theorem~\ref{th:polys} is sharp, as the following
example shows.

\begin{example}
Let $K$, $|\cdot|$, $p$, and $d$ be as in Theorem~\ref{th:main}.
Let $m$ be an integer minimizing $|m|^m$ for $1\leq m\leq d$.
We will construct
a PCF polynomial $\phi(z)\in K[z]$ with a fixed point at $z=0$
for which the multiplier satisfies $|\phi'(0)|=|m|^m$, showing the sharpness of Theorem~\ref{th:polys}. We leave the verification of these examples to the reader.

If $|m|\geq |d|$, then by minimality we must have either $|m| = |d| = 1$ or $m = d$, or else we have $|m|^m > |d|^d$. If $|d|=1$, then \[\phi(z) = (z+1)^d - 1\] has a fixed point at $0$ of multiplier $d$, but being conjugate to $z\mapsto z^d$, $\phi$ is also PCF. On the other hand, if $|d|<1$, then we may choose \[\phi(z)=\frac{d^d}{(1-d)^{d-1}}(z-1)^{d-1}z.\] 

In the case that $|m| < |d| \leq 1$, we
define
$$\phi(z) = a z (z-1)^{m-1} (z-b)^{d-m} \in K[z],$$
where we will choose $a,b\in K$ by
\[a = \alpha^{-1} (\alpha-1)^{-(m-1)} (\alpha-b)^{-(d-m)}\quad\text{and}\quad b = \frac{\alpha(d+1-m - d\alpha)}{1-m\alpha},\]
where $\alpha$ satisfies
\begin{multline}
\label{eq:alphapoly}
 d^d (d-m)^{d-m}\alpha^{d-m+2}(\alpha-1)^{d-1} (1-m\alpha)^{m-1}
\\ -
(1-m)^{m-1} (1-d\alpha)^{d-1} (d+1-m-d\alpha)^{d-m}=0.
\end{multline}
The reader can check that $\phi$ has no critical points other than $1$, $b$, $\infty$,
$\alpha$, and $\beta$, for 
\[\beta = \frac{d+1-m - d\alpha}{d(1-m\alpha)},\]
and from this that $\varphi$ is PCF.
\end{example}

Even if $\phi$ is not a polynomial,
the bound for $|\phi'(\gamma)|$ in
Theorem~\ref{expbound},
and hence the constant $\epsilon_{p,d}$
in Theorem~\ref{th:main}, can be improved to
\begin{equation} \label{epsilonprime}
\epsilon'_{p,d}=\min\big\{|m|^L |\ell|^{\ell-L} :
1\leq \ell \leq n \leq d \text{ and } 1\leq m \leq n \big\},
\end{equation}
where $L$ is defined to be $\lceil (n-1)/2 \rceil$,
with $\lceil x \rceil$ denoting the smallest integer greater
than or equal to $x$.
The proof of this bound is more complicated than the
proof of Theorem~\ref{th:main}, and the new constant $\epsilon'_{p,d}$
is not a substantial improvement over $\epsilon_{p,d}$ and does not appear to be quite sharp. For these reasons, we omit the derivation of this refinement.

Although the quantities $\epsilon_{p, d}$ in Theorem~\ref{th:main} (and the refinement mentioned in the previous paragraph) are probably not sharp, it is worth noting that optimal bounds for rational functions cannot possibly be as strong as the bounds for polynomials presented in Theorem~\ref{th:polys}. We illustrate this with the following example.

\begin{example}
\label{ex:ratl}
Let $K$, $|\cdot|$, $p$, and $d$ be as in Theorem~\ref{th:main}, and suppose that $m$ is an integer satisfying $1\leq m\leq d$ and
$|m|<|d|<1$.  Define
$$\phi(z) = \frac{a z (z-1)^{d-1}}{(z-b)^{d-m}} \in K(z)$$
for some $a,b\in K$ to be chosen shortly.
The only critical points of $\phi$ other than $1$, $b$, and $\infty$
are the two roots $\alpha,\beta\in K$ of
$$mz^2 -(db-d+m+1)z + b.$$
If we declare that $\alpha=\beta\neq 0$, it follows quickly
that $b=m\alpha^2$, and then that $\alpha$ satisfies
$$md \alpha^2 - 2m\alpha - d + m + 1 =0.$$
The Newton polygon of this equation
indicates that such an $\alpha\in K$ exists
with $|\alpha|=|md|^{-1/2}$.  Choosing
$b=m\alpha^2$ and $a=\alpha^{-1} (\alpha-1)^{1-d} (\alpha-b)^{d-m}$,
where the latter choice guarantees that $\phi(\alpha)=1$ and hence
that $\phi$ is PCF,
we have $|b|=|d|^{-1}$ and $|a|=|md|^{-m/2}$.
Thus, the multiplier of the fixed point at $0$ has absolute
value $|\phi'(0)| = |d|^{d-m/2} |m|^{m/2}$.

Although this multiplier does not attain the bound of
$\epsilon'_{p,d}$ in \eqref{epsilonprime}, it is in general strictly smaller than the
polynomial bound $\epsilon_{p,d}^{\mathrm{poly}}$ of Theorem \ref{th:polys}.
For example, if $K=\CC_p$ and $d=p^e + p^{e-1}$ for some
positive integer $e$, then
$\epsilon_{p,d}^{\mathrm{poly}} = |p|^{v}$, where
$v=p^{e-1}\max\{ep,(e-1)(p + 1)\}$.
Meanwhile, the choice of $m=p^e<d$ gives
$|d|^{d-m/2} |m|^{m/2} = |p|^w$, where
$w=p^{e-1} (2ep - p + 2e-2)/2$.
If $e>1 + p/2$, then $|p|^w < |p|^v$.

\end{example}

\section{Periodic Points}\label{perpts}

As noted in the introduction,
we would like to remove the dependence
on $n$ of the bound $\epsilon_{p,d^n}$
in order to prove Theorem~\ref{introper}.
The following lemma allows us to achieve this goal, at least
in the situation when $\epsilon_{p,d}=1$.

\begin{lemma}\label{lem:compositions}
Let $p$ and $K$ be as in Theorem~\ref{th:main}, and
suppose that $\phi(z)\in K(z)$ can be written as a composition of
rational functions, each of degree less than $p$.  Then
$|\deg_{\zeta,\vec{v}}(\phi)|=1$ for all $\zeta\in\PBerk$
and all directions $\vec{v}$ at $\zeta$.
\end{lemma}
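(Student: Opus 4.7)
The plan is to use the multiplicativity of directional multiplicities under composition, established in Lemma~\ref{lem:mult}, and then observe that a positive integer strictly less than $p$ has non-archimedean absolute value equal to $1$.

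Write $\phi=\phi_k\circ\phi_{k-1}\circ\cdots\circ\phi_1$ with $\deg\phi_i<p$ for each $i$. Define inductively $\zeta_0=\zeta$, $\vec{v}_0=\vec{v}$, and then $\zeta_i=\phi_i(\zeta_{i-1})$, $\vec{v}_i=(\phi_i)_*(\vec{v}_{i-1})$. A straightforward induction on $k$ using Lemma~\ref{lem:mult} gives
\[
\deg_{\zeta,\vec{v}}(\phi)=\prod_{i=1}^{k}\deg_{\zeta_{i-1},\vec{v}_{i-1}}(\phi_i).
\]

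Next, I would invoke the standard fact (recalled in Section~\ref{sec:nonarch}) that each factor $\deg_{\zeta_{i-1},\vec{v}_{i-1}}(\phi_i)$ is a positive integer bounded above by $\deg\phi_i$. By hypothesis $\deg\phi_i<p$, so each factor lies in $\{1,2,\ldots,p-1\}$, and in particular is not divisible by $p$. (If $p=0$ the hypothesis on degrees is vacuous and every positive integer has absolute value $1$ automatically.) Therefore the product is a positive integer coprime to $p$, and the non-archimedean absolute value on $K$, which is trivial on integers prime to $p$, gives $|\deg_{\zeta,\vec{v}}(\phi)|=1$.

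There is no real obstacle here; the only thing to be careful about is making sure Lemma~\ref{lem:mult} is applied along the correct sequence of points and tangent directions produced by successive pushforwards, and verifying that the bound $\deg_{\zeta,\vec{v}}(\phi_i)\leq\deg\phi_i$ is justified (which is immediate from the interpretation of the directional multiplicity as the local degree of $\phi_i$ on a sufficiently small annulus abutting $\zeta_{i-1}$, itself bounded by the global degree).
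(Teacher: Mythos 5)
Your argument is correct and is essentially identical to the paper's proof: both decompose $\phi$, apply Lemma~\ref{lem:mult} inductively along the pushed-forward points and directions to get the product formula, and conclude since each factor is a positive integer at most $\deg\phi_i<p$ and hence of absolute value $1$. No issues.
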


\begin{proof}
Write $\phi=\psi_j\circ \cdots \circ\psi_1$, where
$d_i=\deg\psi_i\leq p-1$.  Given any $\zeta\in\PBerk$
and direction $\vec{v}$ at $\zeta$,
write $\xi_1=\zeta$ and
$\vec{w}_1 = \vec{v}$; then set $\xi_i = \psi_i(\xi_{i-1})$
and $\vec{w}_i = \psi_{i,*}(\vec{w}_{i-1})$
for all $i=2,\ldots,j$.  Then
$$\deg_{\zeta,\vec{v}} (\phi) = \prod_{i=1}^j \deg_{\xi_i,\vec{w}_i}(\psi_i)$$
by Lemma~\ref{lem:mult}.  However, each integer
$\deg_{\xi_i,\vec{w}_i}(\psi_i)$
lies between $1$ and $d_i\leq p-1$.  Hence,
\[ |\deg_{\zeta,\vec{v}}(\phi)| = \prod_{i=1}^j |\deg_{\xi_i,\vec{w}_i}(\psi_i)|
= 1^j = 1
\qedhere
\]
\end{proof}

\begin{proof}[Proof of Theorem~\ref{introper}]Let $\gamma$ be a periodic point of $\phi$ of period $n$ and multiplier $\lambda$, with $0<|\lambda| < 1$. Then $\gamma$ is an attracting fixed point for $\phi^n$. If $p = 0$, then $\epsilon_{p,d} = 1$ for all $d$, and by Theorem~\ref{th:main}, $\gamma$ strictly attracts a critical point of $\phi^n$ under the iteration of $\phi^n$. If $p > d$, then by Lemma~\ref{lem:compositions}, we have $|\deg_{\zeta,\vec{v}}(\phi^n)|=1$ for all $\zeta\in\PBerk$ and directions $\vec{v}$. Therefore, applying Theorem~\ref{expbound} to $\phi^n$, which is a rational function of degree $d^n$, $\gamma$ strictly attracts a critical point of $\phi^n$, under the iteration of $\phi^n$.

However, by the chain rule, critical points of $\phi^n$ are precisely points $x\in\PK$ for which at least one of $x, \phi(x), ..., \phi^{n-1}(x)$ is a critical point of $\phi$.  It follows that $\gamma$ strictly attracts a critical point of $\phi$ under the iteration of $\phi^n$, and hence that the periodic cycle of $\gamma$ strictly attracts a critical point of $\phi$ under the iteration of $\phi$.\end{proof}

Corollary~\ref{cor:attracting2d-2} follows immediately from Theorem~\ref{introper}, because under those hypotheses, there is a distinct critical point associated to each attracting cycle. The bound of $2d-2$ is simply the number of critical points of a rational function $\phi\in K(z)$ of degree $d$, counted with multiplicity. It is worth noting that, just as in the complex case, this result bounds the number of attracting \emph{cycles}, not the number of points in those cycles.

\begin{proof}[Proof of Corollary~\ref{cor:thurstonmodp}] Let $F$ be the prime field, and let $\phi$ be a non-isotrivial PCF map defined over a transcendental extension of $F$.  Note that it suffices to prove the statement in the case of transcendence rank 1. 
In general, the rational function $\phi$ corresponds to a positive-dimensional PCF subvariety $Y_\phi\subseteq \mathcal{M}_d$ defined over $\overline{F}$, and if any of the symmetric functions of the multipliers are non-constant on $Y_\phi$, then there exists a curve $C\subseteq Y_\phi$ upon which they are non-constant. The generic point on this curve is a $\overline{F}(C)$-rational point on $\mathcal{M}_d$, which corresponds to a PCF map simply because it satisfies the same critical orbit relations as $\phi$.

So now we assume without loss of generality that $K=\overline{F}(C)$ for some curve $C/\overline{F}$. Passing to a finite extension of $K$, for fixed $n$, we can assume that all of the multipliers of the $n$-periodic points are defined over $K$. If one of these, say $\lambda\in K$, is not contained in $\overline{F}$, then it vanishes at some point in $C$, and hence there is a valuation $v$ of $K$ such that $0<|\lambda|_v<1$. Applying Theorem~\ref{introper} we see that the cycle in question strictly attracts a critical point, contradicting the assumption that $\phi$ was PCF. So we have shown that the multipliers of the $n$-periodic orbits are all $\overline{F}$-rational and, since $n$ was arbitrary, we are done.
\end{proof}

\begin{remark}\label{finiteMc}Although the set of all multipliers of a PCF map in characteristic $0$ or $p > d$ only has to lie in an algebraic extension of the prime subfield, the \emph{symmetric functions} in the multipliers have to also lie in a field of definition of the map (see section 4.10 of~\cite{ads}). Since the map is always defined over a finite-type extension of the prime field, generated by its coefficients, the symmetric functions in the multipliers all lie in an extension of the prime subfield that is both algebraic and finite-type, that is a finite extension. This extension depends on the map but contains the symmetric functions in the multipliers of all periods.\end{remark}

If we had an analogue of McMullen's theorem in positive characteristic, then Corollary~\ref{cor:thurstonmodp} would imply that any PCF map in $K(x)$ of degree $d > p$ must have constant multipliers, and hence must be either Latt\`{e}s or isotrivial.  \emph{A priori}, however, there might be other exceptional varieties for the multiplier spectrum maps over fields of characteristic $p$. Fortunately, in one case we have a McMullen-type result independent of characteristic, giving us rigidity.

\begin{cor}\label{rigid2}Suppose that $K$ is a global function field of characteristic $0$ or $p \geq 3$.  Then any quadratic PCF map over $K$ is isotrivial, and thus defined over a finite extension of the prime field (i.e. a number field in characteristic $0$ and a finite field in characteristic $p \geq 3$) after an appropriate change of coordinates.\end{cor}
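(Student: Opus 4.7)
I would combine Corollary~\ref{cor:thurstonmodp} with the Milnor--Silverman description of $\mathcal{M}_2$. Write $F$ for the prime subfield of $K$, so $F=\QQ$ or $F=\FF_p$ with $p\geq 3$. Since $\phi$ has degree $d=2$ and $\charact K$ is either $0$ or $p>d$, Corollary~\ref{cor:thurstonmodp} applies; in particular the multipliers $\lambda_1,\lambda_2,\lambda_3$ of the three fixed points of $\phi$, and hence their elementary symmetric functions $\sigma_1,\sigma_2,\sigma_3$, all lie in $\overline{F}$.

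Next, I would appeal to the theorem of Milnor~\cite{milnor}, extended to the scheme level by Silverman~\cite{silverman}, which asserts that $(\sigma_1,\sigma_2)$ furnishes an isomorphism $\mathcal{M}_2\cong\AA^2$; the third symmetric function $\sigma_3$ is determined by a universal identity (namely $\sigma_3=\sigma_1-2$) satisfied by every quadratic rational map. Since $\sigma_1,\sigma_2\in\overline{F}$, the conjugacy class $[\phi]\in\mathcal{M}_2(K)$ actually lies in $\mathcal{M}_2(\overline{F})$, which is precisely the statement that $\phi$ is isotrivial.

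To produce an explicit $\PGL_2(\overline{K})$-conjugate of $\phi$ defined over a finite extension of $F$, I would invoke Silverman's explicit normal form for quadratic rational maps, which lifts any point of $\mathcal{M}_2(\overline{F})$ to an actual rational function $\psi\in\overline{F}(z)$. Since $\psi$ has only finitely many coefficients, these generate a finite algebraic extension of $F$---a number field in characteristic zero and a finite field in characteristic $p\geq 3$---and $\phi$ is conjugate to $\psi$ over $\overline{K}$ by construction.

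The main obstacle is the field-of-moduli-versus-field-of-definition question: in principle an $\overline{F}$-point of $\mathcal{M}_2$ need not be representable by a rational function with coefficients in $\overline{F}$. For $d=2$ this obstruction vanishes because of Silverman's explicit cross-section of $\Rat_2\to\mathcal{M}_2$; the lack of such a classification in higher degree is, together with the absence of a positive-characteristic analogue of McMullen's theorem, why we cannot directly extend the corollary to $d>2$ (cf.~Remark~\ref{finiteMc}).
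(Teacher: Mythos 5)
Your proposal is correct and follows essentially the same route as the paper: apply Corollary~\ref{cor:thurstonmodp} to place the fixed-point multipliers (hence $\sigma_1,\sigma_2,\sigma_3$) in $\overline{F}$, use the Milnor--Silverman isomorphism $\mathcal{M}_2\cong\AA^2$ over $\ZZ$ to conclude $[\phi]\in\mathcal{M}_2(\overline{F})$, and then pass from field of moduli to field of definition to get a representative over a finite extension of $F$. The only cosmetic difference is that your field-of-moduli worry is lighter than you suggest: since $\overline{F}$ is algebraically closed, $\mathcal{M}_2(\overline{F})$ is exactly the set of $\PGL_2(\overline{F})$-orbits of $\Rat_2(\overline{F})$, so a representative over $\overline{F}$ (hence over a finite extension of $F$) exists automatically.
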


\begin{proof}By results of Silverman~\cite{silverman}, extending earlier work of Milnor~\cite{milnor}, the multiplier spectrum map $$\Lambda_1:\mathcal{M}_2\to\AA^3, \phi \mapsto (\lambda_1 + \lambda_2 + \lambda_3, \lambda_1\lambda_2 + \lambda_1\lambda_3 + \lambda_2\lambda_3, \lambda_1\lambda_2\lambda_3)$$ defines an isomorphism over $\ZZ$ between $\mathcal{M}_2$ and the plane in $\AA^3$ defined by the equation $\lambda_1\lambda_2\lambda_3 = \lambda_1 + \lambda_2 + \lambda_3 - 2$. Since the image of any PCF map under $\Lambda_1$ will be defined over an algebraic extension of the prime field, so is the corresponding point in $\mathcal{M}_2(K)$ by Corollary~\ref{cor:thurstonmodp}. Finally, since the field of moduli of the map is a finite extension of the prime field, so is the field of definition.\end{proof}

Just as Theorem~\ref{th:main} fails for function fields of characteristic $p$ with $0 < p \leq d$, so does Corollary~\ref{rigid2}. The polynomial $z^{p} + tz$, defined over the completed algebraic closure of $\FF_p((t))$, is PCF since the only critical point is $\infty$, but is non-isotrivial since the multiplier at $0$ is nonconstant.

\section{PCF maps and repelling cycles}\label{goodreduction}
It follows from Theorem~\ref{introper} that if $\phi(z)\in K(z)$ is a PCF map of degree $d\geq 2$, and if the residue characteristic of $K$ is 0 or $p>d$, then
$\phi$ has no periodic cycles that are attracting but not superattracting. Example~\ref{rlex} shows that such a map $\phi$ may have repelling cycles. However, the
following result shows that if $\phi$ is a polynomial, then it also has no repelling cycles.

\begin{theorem}\label{integrality}
Let $p$ and $K$ be as in Theorem~\ref{th:main},
let $\phi\in K[z]$ be a PCF polynomial
of degree $d\geq 2$,
and suppose either that $p=0$ or that $\phi$ can be written
as a composition of polynomials of degree less than $p$.
Then $\phi$ has potentially good
reduction, and in particular it has no repelling cycles.
\end{theorem}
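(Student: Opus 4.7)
The plan is to establish that $\phi$ has potentially good reduction via a Newton polygon argument; the absence of repelling cycles then follows from the standard fact that a rational map of good reduction has Berkovich Julia set equal to a single Type~II point, and so no classical repelling periodic cycles in $\PP^1(K)$.

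Using the hypothesis that $p=0$ or $\phi$ factors as a composition of polynomials of degree less than $p$, Lemma~\ref{lem:compositions} applied to $\phi$ itself gives $|\deg_{\zeta,\vec{v}}\phi|=1$ for every $\zeta\in\PBerk$ and every tangent direction $\vec{v}$; in particular $|d|=1$, no critical point of $\phi$ is wildly ramified, and for $1\le j\le d$ we have $|j|=1$. I would first conjugate $\phi$ by an affine transformation over $K$ to the monic centered form
\[
  \phi(z) = z^d + a_{d-2} z^{d-2} + \cdots + a_0.
\]
In this form, good reduction is equivalent to $|a_j|\le 1$ for every $0\le j\le d-2$. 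Set $R=\max\{1,\,|a_j|^{1/(d-j)}:0\le j\le d-2\}$; then for $|z|>R$ one has $|\phi(z)|=|z|^d$, so iterates of $z$ escape to $\infty$. In particular every periodic point and every critical point of $\phi$ lies in $\overline{D}(0,R)$, and good reduction is equivalent to $R\le 1$.

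Suppose for contradiction that $R>1$. Then the Newton polygon of $\phi$ has a rightmost segment of strictly positive slope $\log R$ terminating at $(d,0)$. Because $|j|=1$ for $1\le j\le d$, the Newton polygon of $\phi'$ has a corresponding terminal segment of the same slope (one unit to the left), producing at least one critical point $\beta$ of $\phi$ with $|\beta|=R$. The contradiction comes from showing, via PCF, that the orbit of $\beta$ must escape. In the Berkovich framework of Section~\ref{sec:nonarch}, the Gauss point $\zeta(0,R)$ is mapped by $\phi$ to $\zeta(\phi(0),R^d)$, and the absence of wild ramification guarantees that the reduction of $\phi$ at $\zeta(0,R)$ is a separable map on residue disks of the full expected degree. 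This rules out catastrophic cancellation at points of the sphere $|z|=R$ and forces $|\phi(\beta)|=R^d$; iterating, $|\phi^n(\beta)|$ grows without bound, contradicting finiteness of the forward orbit of $\beta$.

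The main obstacle is precisely this final step: a priori, the critical point $\beta$ could sit in a ``special'' tangent direction at $\zeta(0,R)$ along which the dominant terms of $\phi$ cancel, keeping $|\phi(\beta)|<R^d$ and allowing a bounded orbit. Handling this rigorously requires the Weierstrass-degree and tangent-direction multiplicity calculations from Lemmas~\ref{lem:mult}, \ref{lem:multcompute}, and~\ref{lem:hbound}, combined with the no-wild-ramification input from Lemma~\ref{lem:compositions}, to conclude that the image direction $\phi_*(\vec{v})$ at $\zeta(0,R^d)$ is the direction toward $\infty$ with full multiplicity, and hence that $|\phi(\beta)|=R^d$ genuinely holds. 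Once potentially good reduction has been established for $\phi$, the final assertion that $\phi$ has no repelling cycles is immediate.
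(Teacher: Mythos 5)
Your overall strategy --- normalize to a monic polynomial, use tameness of the Weierstrass degrees to see that the Newton polygon of $\phi'$ is that of $\phi$ shifted left, extract a critical point $\beta$ on the outermost sphere, and show its orbit escapes to $\infty$ --- is the right one, and the first half matches the paper's argument. One justification is off, though: it is not true that $|j|=1$ for all $1\leq j\leq d$. The hypothesis allows $d\geq p$ (e.g.\ $\phi=\psi\circ\psi$ with $\deg\psi=p-1$ gives $d=(p-1)^2\geq p$ for $p\geq 3$), in which case $|p|<1$ with $p\leq d$. What Lemma~\ref{lem:compositions} actually gives --- and what suffices for the shift of the terminal segment --- is that the \emph{vertices} of the Newton polygon (equivalently, the Weierstrass degrees of $\phi$ on disks about $0$) are prime to $p$, since these are tangent-direction multiplicities.

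The genuine gap is exactly the obstacle you flag and then claim to dispose of: the assertion that tameness forces $\phi_*(\vec{v})$ to be the direction of $\infty$ with full multiplicity for the direction $\vec{v}$ at $\zeta(0,R)$ containing $\beta$, hence $|\phi(\beta)|=R^d$. This is false as stated. Since $R>1$ is the maximal absolute value of a root, the sphere $|z|=R$ also carries zeros of $\phi$, and in each residue disk $D(\alpha,R)$ about such a zero $\alpha$ the value $|\phi|$ collapses well below $R^d$ (all the way to $0$ at $\alpha$); those tangent directions map toward $0$, not $\infty$, regardless of wild ramification. The critical point produced by the terminal segment of the Newton polygon of $\phi'$ could perfectly well sit in one of these disks, and then its orbit need not escape. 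What is missing is a counting argument to produce a critical point in a \emph{zero-free} residue disk: the Newton-polygon shift shows that $\phi$ and $\phi'$ have the same number of roots of absolute value exactly $R$ (with multiplicity), while applying the same shift to $\psi(z)=\phi(z+\alpha)$ for each root $\alpha$ with $|\alpha|=R$ shows that $D(\alpha,R)$ contains exactly one fewer critical point than zeros of $\phi$. By pigeonhole some critical point $\beta$ with $|\beta|=R$ lies in a residue disk containing no zeros; for that $\beta$ one gets $|\beta-\alpha_i|=R$ for every root $\alpha_i$, hence $|\phi^m(\beta)|=R^{d^m}\to\infty$, contradicting PCF. Without this step the proof does not close.
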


\begin{proof}
Suppose $\phi$ does not have potentially good reduction.
After a change of coordinates, we may assume that $0$ is a fixed
point of $\phi$ and that $\phi$ is monic.  Hence, we may write
$\phi(z) = \sum_{i =   1}^{d}a_{i}z^{i}$, with $a_d=1$.
By our supposition, $\phi$ does not have good reduction in this
coordinate, and hence $|a_i|>1$ for some $1\leq i<d$.  As a result,
$\phi$ has a zero of some maximum absolute value $r>1$.

By Lemma~\ref{lem:compositions}, the tangent direction multiplicities
of $\phi$ at every Berkovich point are never divisible by $p$,
and hence for any $s>0$, the Weierstrass degree of $\phi$ on $D(0,s)$
is never divisible by $p$.  Thus, because this same Weierstrass
degree is the smallest integer $n\geq 1$ for which
$|a_n|s^n$ attains its maximum value, it follows that
$n$ is also the smallest integer for which
$|na_n|s^{n-1}$ attains its maximum value.  In other words,
the Newton polygon of $\phi'$ is exactly the Newton polygon
of $\phi$, but shifted to the left by one unit.  In particular,
$\phi$ and $\phi'$ have the same number of zeros (counting multiplicity)
of absolute value $r$.

For any zero $\alpha$ of $\phi$ with $|\alpha|=r$, consider the
polynomial $\psi(z)=\phi(z+\alpha)$.  Because $\psi(0)=0$ but
the tangent multiplicities of $\psi$ are prime to $p$, the same
argument as in the previous paragraph shows that the Newton polygon
of $\psi'$ is simply that of $\psi$ shifted one unit to the left,
and hence $\psi'$ has one fewer zero in $D(0,r)$ than $\psi$ does.
That is, $\phi$ has one fewer critical point in $D(\alpha,r)$ than
it has zeros.

Combining the conclusions of the previous two paragraphs, then,
it follows from the pigeonhole principle that there is some critical
point $\beta$ of $\phi$ such that $|\beta|=r$, but $\phi$ has no
zeros in $D(\beta,r)$.  It follows by induction on $m\geq 1$, then,
that
$$|\phi^m(\beta)| = \prod_{i=1}^d |\phi^{m-1}(\beta)-\alpha_i|
= r^{d^m},$$
where $\alpha_1,\ldots,\alpha_d$ are the zeros of $\phi$.
Since $r>1$, we see that the critical point
$\beta$ is strictly attracted to $\infty$, and hence $\phi$ is not PCF.
\end{proof}

\begin{remark}
Theorem~\ref{integrality} can be easily extended
to any PCF polynomial $\phi\in K[z]$
of degree $d\geq 2$ for which every Berkovich tangent multiplicity
$m=\deg_{\zeta,\vec{v}}(\phi)$ satisfies either $|m|=1$, $|m|>|d|$, or
$m=d$.  After all, under those weaker hypotheses, if the Newton polygon
of $\phi'$ is not simply a shift of the Newton polygon of $\phi$,
then $\phi$ has a critical point $\beta$ of absolute value
\emph{strictly} larger than the maximum absolute value $r$ of the roots
of $\phi$.  Thus, if $\phi$ had bad reduction, and hence $r>0$,
$\beta$ would again be strictly attracted to $\infty$.

In particular, if $K$ is a $p$-adic field, then
Theorem~\ref{integrality} applies to polynomials of $p$-power degree.
This observation reinterprets a result of Epstein~\cite{epstein2}, who
uses it to prove Thurston's rigidity using algebraic methods in the
case of polynomials of prime-power degree.
Unfortunately, one cannot combine the $p$-power result with
Theorem~\ref{integrality}
to prove an analogous theorem for, say, polynomials of degree $p^e m$,
where $m<p$.  Indeed, there are counterexamples to Epstein's statement in
cases where $p|d$ but $d$ is not a power of $p$.
\end{remark}

\begin{remark}
The proofs of Theorems~\ref{th:main} and~\ref{integrality} and Lemma~\ref{lem:compositions} show that the failure of an (insufficiently) attracting periodic point
to strictly attract a critical point, and the failure of bad reduction for a polynomial to force the point at infinity to strictly attract a critical point, can
only occur in the presence of wild ramification. Indeed, such pathological dynamics can only arise when a tangent direction $\vec{v}$ at a Berkovich point $\zeta$
has multiplicity $\deg_{\zeta,\vec{v}}(\phi)$ divisible by $p$. Without loss, we may assume that $\zeta$ is of type~II, and after (possibly different) coordinate
changes on the domain and range, we may assume that $\zeta=\phi(\zeta)=\zeta(0,1)$.  In those coordinates, having a tangent multiplicity divisible by $p$
corresponds to wild ramification of the reduced map $\overline{\phi}\in k(z)$ over $\PP^1(k)$, where $k$ denotes the residue field. In this light,
Lemma~\ref{lem:compositions} shows simply that a composition of maps of degree less than the residue characteristic can never be wildly ramified. Thus, the
important property of low-degree maps is not so much the fact that the degree is small, but rather that such maps are exhibit only tame ramification over the residue field in any choice of coordinates.
\end{remark}

\textbf{Acknowledgements}
The authors would like to thank ICERM for its Semester Program on
Complex and Arithmetic Dynamics, during which some of this work was
completed. The second and third authors would also like to thank BIRS
for hosting them (11RIT155), during which some first steps were taken
toward this project.
The first author gratefully acknowledges the support of NSF grants
DMS-0901494 and DMS-1201341.  The second author gratefully
acknowledges the support of NSERC of Canada during some of this
project. The third author gratefully acknowledges the support of NSF
grant DMS-0852826.

\end{document}